\documentclass[11pt]{article}
\usepackage{a4wide}

\usepackage[a4paper, margin=2.3cm]{geometry}
\usepackage{amsmath,amssymb,amsthm, comment, enumitem, array}
\usepackage{color}
\usepackage{svg}
\usepackage{parskip}
\usepackage{hyperref}
\usepackage{parskip}
\usepackage{setspace}
\usepackage{graphicx}
\usepackage{mathtools}
\usepackage[thinc]{esdiff}
\usepackage[euler]{textgreek}

\DeclareMathOperator{\Ann}{Ann}
\usepackage{babel}

\usepackage[capitalise, nameinlink]{cleveref}
\crefname{equation}{}{}
\crefname{figure}{{\sc Figure}}{{\sc Figure}}
\crefname{subsection}{Subsection}{Subsections}

\newtheorem{theorem}{Theorem}[section]
\newtheorem{proposition}[theorem]{Proposition}

\newtheorem{lemma}[theorem]{Lemma}

\newtheorem*{definition*}{Definition}

\def\Z{{\mathbb  Z}}
\def\R{{\mathbb  R}}
\def\N{{\mathbb  N}}

\hypersetup{
    colorlinks=true,
    linkcolor = blue,
    citecolor=magenta,
    urlcolor=cyan,
}


\def\R{\mathbb{R}}

\def \R{{\mathbb R}}

\def \Z{{\mathbb Z}}
\def \1{\textbf{1}}

\def\Q{\mathbb{Q}}

\def\l({\left(}
\def\r){\right)}

\newcommand{\dist}{\operatorname{dist}}
\newcommand{\fr}[1]{\left\{#1\right\}} 
\newcommand{\nnorm}[1]{\left\|#1\right\|}

\theoremstyle{definition}

\newcommand{\T}{\mathbb T}

\begin{document}
\title{Polynomial extensions of Raimi's theorem}
\author{Norbert Hegyv\'{a}ri\thanks{E\"otv\"os University and associated member of Alfr\'{e}d R\'{e}nyi Institute, Hungary. Email: hegyvari@renyi.hu} \and J\'{a}nos Pach\thanks{Alfr\'{e}d R\'{e}nyi Institute, Hungary. Email:
{\tt pach@renyi.hu}. \newline
\hspace*{0.5cm} Supported by NKFIH grants K-131529,  and ERC Advanced Grant 882971``GeoScape.''}\and Thang Pham\thanks{Institute of Mathematics and Interdisciplinary Sciences at Xidian University, China. \newline
\hspace*{0.5cm} Email: phamanhthang.vnu@gmail.com}}
\maketitle
\begin{abstract}
Raimi's theorem guarantees the existence of a partition of $\mathbb{N}$ into two parts with an un\-avoid\-able intersection property: for any finite coloring of $\mathbb{N}$, some color class intersects both parts infinitely many times, after an appropriate shift (translation). We establish a polynomial extension of this result, proving that such intersections persist under polynomial shifts in any dimension. Let $P^{(1)},\dots,P^{(f)}\in\mathbb{Z}[x]$ be non-constant polynomials with positive leading coefficients and $P^{(j)}(0)=0$ for every $j$. We construct a partition of $\mathbb{N}^k$ into an arbitrarily fixed finite number of pieces such that for any coloring of $\mathbb{N}^k$ with finitely many colors, there exist $x_0\in \mathbb{N}$ and a single color class that meets all partition pieces after shifts by $x_0+P^{(j)}(h)$ in each of the $k$ coordinate directions, for every $j$ and infinitely many values $h\in \mathbb{N}$. Our proof exploits Weyl's equi\-dis\-tri\-bu\-tion theory, Pontryagin duality, and the structure of polynomial relation lattices. We also prove some finite analogues of the above results for abelian groups and $SL_2(\mathbb{F}_q)$.
\end{abstract}

\tableofcontents
\section{Introduction}

Ramsey theory admits two complementary perspectives. The \emph{coloring approach} begins with an arbitrary finite partition of the integers and seeks structures that some color class must contain, for instance, Van der Waerden's theorem \cite{vanderWaerden1927} guarantees arbitrarily long monochromatic arithmetic progressions. The \emph{density approach} assumes only that a set has positive upper density and proves the occurrence of prescribed patterns. Szemer\'edi \cite{Sze} established the existence of arbitrarily long arithmetic progressions in sets of positive density, while S\'ark\"ozy \cite{Sarkozy1978} and independently Furstenberg \cite{Furstenberg1977} proved that such sets contain pairs whose difference is a nonzero square.

Raimi \cite{Raimi} introduced a complementary viewpoint that begins with neither a given dense set nor a given coloring. Instead, one constructs a specific partition of $\mathbb{N}$ with an unavoidable intersection property: for every finite coloring, there exists a shift such that a single color meets every piece of the partition in an infinite set. Informally, while classical Ramsey results ask ``what patterns are forced by density or by finite colorings?'', Raimi's perspective asks ``which partitions are unavoidable in any finite coloring after an appropriate shift?''

Raimi's theorem reads as follows.

\begin{theorem}[Raimi \cite{Raimi}]\label{raimithm}
There exists a subset $E\subseteq \mathbb{N}$ with the following property. For every finite coloring of $\mathbb{N}$ with $t$ colors, $\mathbb{N}=\bigcup_{j=1}^t F_j$, $t\in\mathbb{N},$ there exist a single color class $F_m$, $1\le m\le t$, and $k\in \mathbb{N}$ such that both $(F_m+k)\cap E$ and $(F_m+k)\setminus E$ are infinite sets.
\end{theorem}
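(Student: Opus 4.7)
The plan is to argue by contradiction, using a pigeonhole argument on the ``local patterns'' of the indicator sequence $\1_E$. The key observation is that if every color class $F_m$ failed the conclusion for every shift $k$, then $\1_E$ on translates of each $F_m$ would be almost constant, yielding at most $t$ possible patterns of $E$-membership on length-$K$ windows---a contradiction once $2^K > t$, provided $E$ is rich enough to realize all $2^K$ patterns.

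Concretely, I would first construct $E$ so that $\1_E \in \{0,1\}^\Nbb$ is \emph{universal}, meaning that every finite binary word $\sigma \in \{0,1\}^K$ occurs as $(\1_E(n), \1_E(n+1), \dots, \1_E(n+K-1))$ for infinitely many $n$. This is elementary: one may enumerate all finite $\{0,1\}$-words so that each appears infinitely often in the list, and then concatenate them (with a single $0$ between consecutive words) to define $\1_E$.

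Now suppose the conclusion fails for some partition $\Nbb = F_1 \cup \dots \cup F_t$. Then for each $m$ and each $k \in \Nbb$, at least one of $(F_m+k) \cap E$ and $(F_m+k) \setminus E$ is finite, so there is $\epsilon_m(k) \in \{0,1\}$ with $\1_E(n+k) = \epsilon_m(k)$ for all but finitely many $n \in F_m$. Fix $K$ with $2^K > t$ and, for each $n$, define the window
$$\tau_K(n) := (\1_E(n), \1_E(n+1), \dots, \1_E(n+K-1)) \in \{0,1\}^K.$$
Applying the previous remark for $k = 0, 1, \dots, K-1$ simultaneously, we see that on each $F_m$ the map $\tau_K$ is eventually constant, equal to $(\epsilon_m(0), \dots, \epsilon_m(K-1))$. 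Summing over the $t$ color classes, $\tau_K$ attains at most $t < 2^K$ distinct values on all but finitely many $n \in \Nbb$, so some word $\sigma^* \in \{0,1\}^K$ is omitted. This contradicts the universality of $\1_E$, forcing the existence of some $m$ and some $k \in \{0,1,\dots,K-1\}$ such that both $(F_m+k) \cap E$ and $(F_m+k)\setminus E$ are infinite.

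The only delicate point is the construction of $E$, which is trivial for Raimi's original statement; the substantial work in the paper must come in the polynomial extensions, where the analogous universality has to hold for joint patterns of $E$-membership along shifts of the polynomial form $x_0 + P^{(j)}(h)$ in each coordinate direction. This is presumably the role played by the Weyl equidistribution and Pontryagin duality tools announced in the abstract.
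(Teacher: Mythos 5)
Your proof is correct; it is an elementary ``universal binary sequence'' argument in the spirit of the Hindman reference~\cite{HM} cited by the paper. Note that the paper itself does not prove Theorem~\ref{raimithm} at all: it states the theorem with a citation to Raimi~\cite{Raimi} and observes that Raimi's original proof was topological while Hindman later gave an elementary one. The constructive machinery of Sections~2--3 is devoted to the extensions, Theorems~\ref{HD} and~\ref{thmpolynomial-main}, and is genuinely different from your route: there the partition pieces $E_i$ are determined by which dyadic tile of $(0,1)$ contains $\varphi(\mathbf{a})=\{\sum_m \alpha_m a_m\}$ for $\mathbb{Q}$-independent irrationals $\alpha_m$; Lemma~\ref{lm2.1} locates a color class $F_m$ whose $\varphi$-image is dense in some interval $J$; and the shift $h$ is chosen using equidistribution of $\{\beta h\}$ (Lemma~\ref{lm2.2}), respectively of $\{\beta P^{(j)}(h)\}$ (Theorem~\ref{lm2.2'}), so that $J+\{h\beta\}$ covers the band containing all $r$ tiles. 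Your pigeonhole argument is shorter and fully combinatorial for the basic theorem, but it exploits the shift being a translation, so that the relevant $E$-values sit in a \emph{contiguous} window of length $K$; for the polynomial extension the shifts $x_0+P^{(j)}(h)$ are scattered, and the Weyl/Pontryagin toolkit is precisely what replaces your universality step, as you correctly anticipated. One small convention point: if the paper takes $\mathbb{N}=\{1,2,\dots\}$ then $k=0$ is not allowed, but replacing the window $k\in\{0,\dots,K-1\}$ by $k\in\{1,\dots,K\}$ fixes this with no other change.
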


In other words, after an appropriate shift $k$, the set $F_m + k$ unavoidably intersects both $E$ and its complement in infinitely many elements. This theorem was originally proved by Raimi using topological methods, and subsequently Hindman \cite{HM} provided an elementary proof. Strengthened versions specifying the densities of the partition sets or guaranteeing positive densities in the conclusion can be found in \cite{Bergelson2, NH}.

Throughout this paper, $\mathbf{1}_k$ stands for the element of $\mathbb{N}^k$ whose each coordinate is $1$. For any finite or infinite set $X$ of integers (or elements of a group equipped with a binary operation $+$), let $\mathrm{FS}(X)$ denote the set of all finite sums formed by the elements of $X$.

Raimi's theorem extends naturally to higher-dimensional lattices using diagonal shifts. 

\begin{theorem}\label{HD}
Let $r, t, k\in\mathbb{N}$. There exists a partition
$\bigcup_{i=1}^rE_i$ of $\mathbb{N}^k$ such that 
for every finite coloring of $\mathbb{N}^k$ with $t$ colors,
$
\mathbb{N}^k = \bigcup_{j=1}^t F_j,
$ there exist a single color class $F_m$, $1\le m\le t$, an element $x_0\in \mathbb{N}$, and a sequence $\{x_n\}_{n\ge 1}$
in $\mathbb{N}$ such that for every $h$ from the set $x_0+
\mathrm{FS}(\{x_n\}_{n\ge 1})$ and every $i\in\{1,2,
\ldots,r\}$, the set $(F_m+h\mathbf{1}_k)\cap E_i$ has infinitely many elements.
\end{theorem}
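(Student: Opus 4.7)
My strategy is to reduce the $k$-dimensional problem to a one-dimensional multi-piece version of Raimi's theorem with $\mathrm{IP}$-shifts, and then prove that one-dimensional statement by combining a rapid-growth block construction with Hindman's finite sums theorem.

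\textbf{Stage 1 (reduction to dimension one).} Define $E_i:=E'_i\times\mathbb{N}^{k-1}$, where $\mathbb{N}=\bigsqcup_{i=1}^{r}E'_i$ is the one-dimensional partition to be built in Stage~2. For any $t$-coloring $\mathbb{N}^k=\bigsqcup_{j}F_j$, induce the coloring $\chi\colon\mathbb{N}\to[t]$ by setting $\chi(a)$ to be the least $j$ such that $|F_j\cap(\{a\}\times\mathbb{N}^{k-1})|=\infty$; such $j$ exists because each fiber is infinite. Once Stage~2 produces $m$, $x_0$, and $\{x_n\}_{n\ge 1}$ for which $(\chi^{-1}(m)+h)\cap E'_i$ is infinite for every $i$ and every $h\in x_0+\mathrm{FS}(\{x_n\}_{n\ge 1})$, the same data witness Theorem~\ref{HD}: for any such $h$, $i$, and any $a\in\chi^{-1}(m)$ with $a+h\in E'_i$, the infinitely many points of $F_m$ in the fiber $\{a\}\times\mathbb{N}^{k-1}$ shift under $+h\mathbf{1}_k$ into $(F_m+h\mathbf{1}_k)\cap E_i$, because their first coordinate is $a+h\in E'_i$.

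\textbf{Stage 2 (one-dimensional multi-piece Raimi with IP shifts).} Take a rapidly increasing sequence $N_1<N_2<\cdots$ with $N_{n+1}/N_n\to\infty$ and set $E'_i:=\bigsqcup_{n\equiv i\,\pmod r}[N_n,N_{n+1})$. The single-shift statement---for any $t$-coloring $\chi$ of $\mathbb{N}$, there exist $m$ and $h_0$ with $(\chi^{-1}(m)+h_0)\cap E'_i$ infinite for every $i$---is an $r$-piece adaptation of Hindman's elementary proof of Raimi's theorem \cite{HM}. To upgrade from a single shift to an IP-set of shifts, I consider the auxiliary finite coloring $\Psi\colon\mathbb{N}\to(\{0,1\}^{r})^{t}$ defined by $\Psi(h)_{j,i}=1$ iff $(\chi^{-1}(j)+h)\cap E'_i$ is infinite. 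Hindman's finite sums theorem extracts a base point $x_0$ and a sequence $\{x_n\}$ so that $x_0+\mathrm{FS}(\{x_n\}_{n\ge 1})$ is $\Psi$-monochromatic; if the common $\Psi$-value has some row $j=m$ equal to $\mathbf{1}_{[r]}$, then $F_m$ together with this IP-set is the required witness.

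\textbf{Main obstacle.} The chief difficulty is to guarantee that the $\Psi$-monochromatic IP-set produced by Hindman lands in a ``good'' color class---one whose value has some row $j=m$ equal to $\mathbf{1}_{[r]}$---rather than in a trivial class where every row is deficient. This reduces to showing that the set $G:=\bigcup_{j}\{h : \Psi(h)_{j,\cdot}=\mathbf{1}_{[r]}\}$ of good shifts is \emph{central} in $(\beta\mathbb{N},+)$, which is a genuine strengthening of the single-shift Raimi conclusion from nonemptyness to IP-richness. The rapid-growth condition $N_{n+1}/N_n\to\infty$ should deliver centrality through a pigeonhole-averaging argument: within windows of shifts of size comparable to successive block sizes, a positive fraction of shifts rotates some fixed infinite color class $\chi^{-1}(j)$ through all $r$ block-types simultaneously, making $G$ piecewise syndetic and hence central. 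Once centrality is in hand, a minimal idempotent $p\ni G$ selects a $\Psi$-monochromatic set inside $G$ containing a full IP-set---by the standard fact that any member of a minimal idempotent is IP-rich and closed under intersection with other members of $p$---and the monochromatic value must then have a nontrivial row, yielding the single color class $F_m$ that serves all IP-shifts at once.
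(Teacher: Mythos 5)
Your Stage~1 reduction to the one-dimensional case is correct and a nice idea: the product partition $E_i=E'_i\times\mathbb{N}^{k-1}$ together with the fiber-induced coloring $\chi(a)=\min\{j:\,|F_j\cap(\{a\}\times\mathbb{N}^{k-1})|=\infty\}$ does faithfully transfer the one-dimensional conclusion to $\mathbb{N}^k$, because every point of an infinite fiber of $F_m$ over a good $a$ lands in $E_i$ after the diagonal shift. This is genuinely different from the paper, which builds the partition of $\mathbb{N}^k$ directly via the map $\varphi(\mathbf a)=\{\sum_m\alpha_m a_m\}$ with $\mathbb{Q}$-independent $\alpha_m$ and never needs a dimension-reduction step.

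Stage~2, however, has a real gap at exactly the point you flag as the ``main obstacle.'' The inference ``piecewise syndetic and hence central'' is false. Piecewise syndeticity means $\overline{G}\cap K(\beta\mathbb{N})\neq\emptyset$, while centrality means $\overline{G}$ contains a minimal idempotent; the latter is strictly stronger. A concrete counterexample: $A=\bigcup_{n}[10^{n!},10^{n!}+n]$ is piecewise syndetic, but it contains no $\mathrm{FS}$-set of any infinite sequence (sums of two elements from the same block overshoot the block, and sums from distinct blocks miss every block), so it cannot be a member of any idempotent ultrafilter; in particular it is not central. So even if your pigeonhole-averaging argument established that $G$ is piecewise syndetic, that would not yield an idempotent $p$ with $G\in p$, which is exactly what you need to run the ``intersect with a $\Psi$-color class in $p$ and extract an $\mathrm{FS}$-set'' argument. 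You would instead need to show $G$ is thick, or show directly that $G$ belongs to some idempotent, and the sketch given does not accomplish that. In addition, neither the single-shift $r$-piece Raimi statement nor the piecewise syndeticity of $G$ is actually proven -- both are only asserted -- so the argument is incomplete at multiple points.

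For comparison, the paper avoids ultrafilter algebra entirely. It first uses a pigeonhole/Baire-type argument (Lemma~\ref{lm2.1}) to find a color class $F_m$ whose $\varphi$-image is dense in some interval $J$. Choosing a band $S_{j^\ast}$ three times narrower than $J$ and a shift $x_0$ with $J+\{x_0\beta\}\supset S_{j^\ast}$, the problem reduces to keeping $\{h\beta\}$ within $\varepsilon$ of $\{x_0\beta\}$ for all $h$ in the shift set. Lemma~\ref{lm2.2} then constructs explicitly, by a geometric-series halving argument, a sequence $\{x_n\}$ with $\|\beta h'\|<\varepsilon$ for all $h'\in\mathrm{FS}(\{x_n\})$. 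This produces the IP structure constructively, with no need to establish centrality of any auxiliary set. If you want to salvage your Stage~2, the most direct fix would be to replace the ultrafilter step with a constructive $\mathrm{FS}$-approximation of this kind, or else to prove directly that $G$ contains an $\mathrm{FS}$-set (e.g., by iteratively choosing $x_{n+1}$ so large that $x_{n+1}+\mathrm{FS}(\{x_1,\dots,x_n\})$ stays inside $G$, using the block growth $N_{n+1}/N_n\to\infty$ explicitly) rather than appealing to centrality.
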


While Theorem~\ref{HD} extends the intersection property to higher dimensions with diagonal shifts, a natural question arises: does this property persist under \emph{polynomial} shifts? This question is motivated by Bergelson and Leibman's celebrated extension \cite{BL} of Szemer\'edi's theorem from arithmetic progressions to polynomial progressions. Their work demonstrated that polynomial patterns are just as robust as linear patterns in dense sets. 

Our main result establishes an analogous phenomenon for Raimi-type intersections, showing that partition unavoidability extends to polynomial shifts and revealing deep connections to Weyl's polynomial equidistribution theory.

\begin{theorem}\label{thmpolynomial-main}
Let $r, t, k, f\in \mathbb{N}$.
Let $P^{(1)},\dots,P^{(f)}\in\Z[x]$ be non-constant polynomials with the properties that $P^{(1)}(0)=\cdots=P^{(f)}(0)=0$ and the leading coefficients are positive. 

There exists a partition
$\bigcup_{i=1}^rE_i$ of $\mathbb{N}^k$ such that for every finite coloring of $\mathbb{N}^k$ with $t$ colors,
$
\mathbb{N}^k = \bigcup_{j=1}^t F_j,
$ there exist a single color class $F_m$, $1\le m\le t$, an element $x_0\in \mathbb{N}$, and a set $H\subset\mathbb{N}$ of positive lower density with the property that for every $h\in H$ and every $j\in\{1,\dots,f\}$, the set
\[
\big(F_m+(x_0+P^{(j)}(h))\mathbf{1}_k\big)\cap E_i
\]
has infinitely many elements.
\end{theorem}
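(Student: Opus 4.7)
My plan is to define $E_1,\dots,E_r$ as pull-backs of a Jordan measurable partition of the circle along an equidistributed orbit. Pick an irrational $\alpha$, partition $\mathbb{T}=\mathbb{R}/\mathbb{Z}$ into $r$ arcs $I_1,\dots,I_r$ of positive length, set $\sigma(\mathbf{n})=n_1+\cdots+n_k$, and let
\[
E_i=\bigl\{\mathbf{n}\in\mathbb{N}^k:\{\sigma(\mathbf{n})\alpha\}\in I_i\bigr\},\qquad i=1,\dots,r.
\]
The decisive feature is that the diagonal shift $\mathbf{n}\mapsto\mathbf{n}+y\mathbf{1}_k$ induces on $\mathbb{T}$ the translation $\phi\mapsto\phi+ky\alpha$, so every combinatorial question about $(F_m+y\mathbf{1}_k)\cap E_i$ reduces to a distributional question about the trace $\phi(F_m):=\{\{\sigma(\mathbf{n})\alpha\}:\mathbf{n}\in F_m\}$ on the circle.

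\textbf{Selecting a good color class.} Given a $t$-coloring, I would combine pigeonhole with the one-dimensional Raimi-type argument behind Theorem~\ref{raimithm} to extract a color class $F_m$ and a shift $x_0\in\mathbb{N}$ with the following \emph{robust distribution} property: for every arc $I_i$ and for a positive-measure set of additional torus shifts $s\in\mathbb{T}$, the translated arc $I_i-kx_0\alpha-s$ meets $\phi(F_m)$ in infinitely many points. The fibres $\sigma^{-1}(s)\subset\mathbb{N}^k$ have polynomial size $\Theta(s^{k-1})$, so the projection $\sigma(F_m)$ has positive upper Banach density in $\mathbb{N}$, and standard one-dimensional arguments combined with Weyl equidistribution of $s\mapsto s\alpha$ promote this density property into the distributional statement above.

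\textbf{Joint polynomial equidistribution.} For the shifts $y=x_0+P^{(j)}(h)$ the torus increment is $k(x_0+P^{(j)}(h))\alpha$. By Weyl's polynomial equidistribution theorem, each individual sequence $h\mapsto kP^{(j)}(h)\alpha$ equidistributes on $\mathbb{T}$. The joint sequence
\[
h\mapsto\bigl(kP^{(1)}(h)\alpha,\dots,kP^{(f)}(h)\alpha\bigr)\in\mathbb{T}^f
\]
equidistributes on a closed subgroup $G\subseteq\mathbb{T}^f$; by Pontryagin duality, $G$ is the annihilator of the lattice $\Lambda\subset\mathbb{Z}^f$ of integer tuples $(c_1,\dots,c_f)$ for which $\alpha\sum_j c_j P^{(j)}(x)$ has all coefficients in $\mathbb{Z}$ -- this is the \emph{polynomial relation lattice} of the system. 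The event required by the theorem, namely that for every pair $(j,i)$ the shifted arc $I_i-k(x_0+P^{(j)}(h))\alpha$ meets $\phi(F_m)$ infinitely often, cuts out an open subset of $G$ of positive Haar measure thanks to the robust distribution obtained in the previous step. Equidistribution of the joint sequence in $G$ then yields a set $H\subset\mathbb{N}$ of positive lower density on which all required intersections are infinite.

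\textbf{Main obstacle.} The principal difficulty is the joint equidistribution step and, in particular, obtaining \emph{positive lower} density of $H$ rather than merely non-emptiness or positive upper density. For $f=1$ this is classical Weyl, but for $f\ge 2$ the subgroup $G$ can be a proper subtorus -- for instance when $P^{(2)}=cP^{(1)}$ with $c\in\mathbb{Q}$, or more generally when the $P^{(j)}$ exhibit algebraic dependencies in their coefficients -- and identifying $G$ and verifying that the ``all-good'' event has positive Haar measure requires a careful computation of the polynomial relation lattice $\Lambda$ together with its Pontryagin-dual description. A secondary subtlety is to choose $x_0$ compatibly with the distributional output of the second step so that the good event actually lies inside, rather than on the boundary of, the subtorus $G$.
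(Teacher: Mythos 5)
Your outline correctly isolates the analytic heart of the argument: to get a positive-lower-density set of shifts $h$ you need joint Weyl equidistribution of $\bigl(\{\beta P^{(1)}(h)\},\ldots,\{\beta P^{(f)}(h)\}\bigr)$ on a closed subgroup of $\mathbb{T}^f$, and you correctly identify that subgroup, via Pontryagin duality, as the annihilator of the polynomial relation lattice $\{\mathbf{m}\in\Z^f:\sum_j m_j P^{(j)}\equiv 0\}$. This is precisely Theorem~\ref{lm2.2'} of the paper, proved there through Proposition~\ref{lm3.11}, Lemma~\ref{lem:H=Hprime}, and Lemma~\ref{prop:weyl-H}, and your sketch of that mechanism is sound. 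You also correctly reduce the combinatorics to translations of the circle trace of a color class.

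However, the partition you propose does not work, and the failure is fatal rather than cosmetic. You take $E_i=\varphi^{-1}(I_i)$ for a fixed partition of $\mathbb{T}$ into $r$ arcs $I_1,\ldots,I_r$. The argument then needs: for some color class $F_m$ and some admissible $\gamma\in\mathbb{T}$, the translated trace $\varphi(F_m)+\gamma$ meets every $I_i$ infinitely often. But an adversary may color $\mathbf{n}$ by which of $t$ arcs $K_1,\ldots,K_t$ of length $1/t$ contains $\varphi(\mathbf{n})$; then $\varphi(F_m)$ is confined to a single short arc $J$ of length $1/t$, and no translate of $J$ can meet all $I_i$ once $r$ is not too small. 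Concretely, for $r=4$, $t=2$, equal arcs $|I_i|=1/4$ and $|J|=1/2$: the sets $I_i-J$ are four arcs of length $3/4$ on $\mathbb{T}$, and a direct computation shows their intersection is empty, so the event ``$J+\gamma$ meets every $I_i$'' holds for \emph{no} $\gamma$. Your ``robust distribution'' step therefore produces an empty event, and the subsequent Haar-measure and equidistribution arguments have nothing to apply to.

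The paper avoids this exactly by refusing to pick a fixed scale. It partitions $(0,1)$ into countably many dyadic bands $S_j=(1-2^{-j},\,1-2^{-(j+1)}]$ of exponentially shrinking widths, slices each band into $r$ tiles $T_{j,1},\ldots,T_{j,r}$, and sets $E_i=\varphi^{-1}\bigl(\bigcup_j T_{j,i}\bigr)$. Then, whatever interval $J$ Lemma~\ref{lm2.1} hands you, there is always a band $S_{j^{\ast}}$ narrow enough that $J+\{x_0\beta\}\supset S_{j^{\ast}}$ for a suitable $x_0$, and all the tiles of that one band lie inside a fixed compact subset of $J+\{x_0\beta\}$. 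Hitting the $r$ tiles $T_{j^{\ast},i}$ suffices because each $E_i$ contains the $i$-th tile of \emph{every} band. With that multi-scale structure in place and Lemma~\ref{lm2.3} to absorb the $\varepsilon$-perturbations $\{\beta P^{(j)}(h)\}$, the rest of your outline — choose $x_0$, shrink $\varepsilon$, apply the subtorus equidistribution to get positive lower density, take $h$ large so $P^{(j)}(h)\ge 0$ — is essentially the paper's proof. In short: the equidistribution machinery is the right new ingredient and you had it, but the partition must have tiles at every scale, not a fixed finite set of arcs.
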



While Theorem \ref{thmpolynomial-main} extends the intersection property to polynomial shifts, it does not formally generalize Theorem \ref{HD}. The latter provides shifts forming an FS-set with additive structure, while Theorem \ref{thmpolynomial-main} guarantees infinitely many individual shifts that simultaneously work for all polynomial shifts $P^{(j)}(h)$, $1\le j\le f$.
\smallskip

Beyond the infinite setting, we can ask whether similar phenomena occur in finite algebraic structures. We answer this affirmatively, establishing finite analogues in both abelian groups and the non-abelian group $SL_2(\mathbb{F}_q)$. These results demonstrate that Raimi-type intersection properties extend to finite algebraic structures, with quantitative bounds replacing infinite intersections.

\begin{theorem}\label{thm3.1}Let $r, t\in \mathbb{N}$. Let $G$ be a finite cyclic group of order $N$, written additively. There are constants \(\alpha=\alpha(r, t)>0\) and \(N_0=N_0(r,t)\) such that the following holds for every \(N\ge N_0\). There exists a partition $\bigcup_{i=1}^rE_i$ of $G$ such that for every finite coloring of $G$ with $t$ colors,
$
G = \bigcup_{j=1}^t F_j,
$ there exist a single color class $F_m$, $1\le m\le t$, and an element $h\in G$ such that
\[\forall i\in\{1,2,
\ldots,r\},\quad |(F_m+h)\cap E_i|\ge \alpha |G|.\]
The same conclusion holds for any finite abelian group $G$ whose exponent is at least $N_0$, albeit with a weaker constant $\alpha'(r,t)\ge \frac{\alpha(r,t)}{t}.$
\end{theorem}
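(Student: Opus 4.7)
The plan is to prove the cyclic case first via a probabilistic construction of the partition combined with Fourier analysis, and then reduce the general finite abelian case to the cyclic one via the fundamental theorem of finite abelian groups together with a plurality-colour pigeonhole on the fibres of a projection.

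For $G=\mathbb{Z}/N\mathbb{Z}$, I would take $E_1,\dots,E_r$ to be the uniform random partition in which each element of $G$ joins one of the $r$ classes with probability $1/r$, independently. Standard Chernoff bounds for sums of independent Bernoulli variables weighted by characters, combined with a union bound over $i\in\{1,\dots,r\}$ and over the non-trivial characters of $G$, show that with positive probability the partition simultaneously satisfies $|E_i|\ge N/(2r)$ for every $i$ and $\max_{i,\,\chi\neq 1}|\widehat{\mathbf{1}_{E_i}}(\chi)|\le C\sqrt{N\log N/r}$ for some absolute constant $C$; fix one such partition. Given any $t$-coloring $G=\bigcup_{j=1}^t F_j$, pigeonhole supplies a colour class $F_m$ with $|F_m|\ge N/t$. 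Writing $|(F_m+h)\cap E_i|$ as a correlation of indicator functions and applying Parseval in the shift variable $h$, the deviation $\Delta_i(h):=|(F_m+h)\cap E_i|-|F_m||E_i|/N$ has $L^2$ average at most $\bigl(\max_{\chi\ne 1}|\widehat{\mathbf{1}_{E_i}}(\chi)|^2\bigr)\cdot|F_m|/N\le C^2|F_m|\log N/r$. Combining Chebyshev with a union bound over the $r$ partition pieces, once $N\ge N_0(r,t)$ the fraction of ``bad'' shifts is strictly less than $1$, producing $h\in G$ with $|(F_m+h)\cap E_i|\ge\alpha(r,t)N$ for every $i$, where $\alpha(r,t)$ can be taken of order $1/(rt)$.

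For a general finite abelian group $G$ with $\exp(G)\ge N_0(r,t)$, the fundamental theorem of finite abelian groups yields a decomposition $G=H\oplus K$ with $H$ cyclic of order $\exp(G)$, and I would lift the cyclic partition by setting $E_i:=E_i^H\oplus K$, which is $K$-invariant. Given a $t$-coloring $F_1,\dots,F_t$ of $G$, for each $h\in H$ some colour class contains at least $|K|/t$ elements of the fibre $h+K$; pick such a plurality colour and denote its index by $\mu(h)\in\{1,\dots,t\}$. The map $\mu\colon H\to\{1,\dots,t\}$ is itself a $t$-coloring of $H$, so the cyclic case supplies $m^*$ and $h^*\in H$ with $|(\mu^{-1}(m^*)+h^*)\cap E_i^H|\ge\alpha(r,t)|H|$ for every $i$. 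Since each of the elements counted here lies in a fibre containing at least $|K|/t$ points of $F_{m^*}$, we obtain $|(F_{m^*}+(h^*,0))\cap E_i|\ge(\alpha(r,t)/t)\,|G|$, which gives the claimed $\alpha'(r,t)\ge\alpha(r,t)/t$.

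The main technical point is the Fourier control on the random partition in the cyclic step: one needs a Chernoff estimate uniform in the non-trivial characters delivering an $O(\sqrt{N\log N/r})$ cap on $|\widehat{\mathbf{1}_{E_i}}(\chi)|$, and then tracking constants precisely enough to identify the threshold $N_0(r,t)$. The fibre pigeonhole in the abelian reduction and the Chebyshev-plus-union-bound step in the cyclic case are, by contrast, routine.
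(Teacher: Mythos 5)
Your argument is correct, and for the cyclic case it takes a genuinely different route from the paper. The paper's cyclic proof is entirely deterministic and explicit: it partitions $\mathbb{Z}_N$ into nested intervals $E_1,\dots,E_r$ whose lengths shrink geometrically by a factor $k=1+2^{r+3}t$, finds a first shift $h_1$ by an averaging identity that places a $1/t$ fraction of $F_m$ inside $E_1$, and then iteratively selects a rightmost ``good'' subinterval of each $E_s$ and translates it onto $E_{s+1}$, with a careful accounting of how the later (small) shifts $h_{s+2},\dots,h_r$ erode the earlier intersections. This yields the explicit but very small constant $\alpha\ge \frac{2}{(1+t2^{r+3})^r-1}$. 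Your random-partition construction, with a Chernoff cap on $\max_{\chi\neq1}|\widehat{\mathbf{1}_{E_i}}(\chi)|$ followed by Parseval in the shift variable and Chebyshev, is nonconstructive but quantitatively much stronger, giving $\alpha$ of order $1/(rt)$, and it sidesteps the paper's delicate bookkeeping of shift perturbations. It is also worth observing that your Fourier argument never actually uses cyclicity: it would prove the abelian statement directly under $|G|\ge N_0$ (which is implied by $\exp(G)\ge N_0$) with the unweakened constant $\alpha$, so the fibre-plurality reduction through $G=H\oplus K$ and the plurality colour $\mu$ on cosets $h+K$ — which you describe in a form essentially identical to the paper's — is only needed if one insists on building on the cyclic case rather than running the Fourier argument on $G$ itself.
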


If the finite abelian group $G$ is written multiplicatively, then, of course, the conclusion should read as $|(F_m\cdot h)\cap E_i|\ge \alpha |G|$. 


\begin{theorem}\label{thm3.2}
Let $r, t\in \mathbb{N}$. There are constants \(\alpha=\alpha(r, t)>0\) and \(q_0=q_0(r,t)\) such that the following holds for every odd prime power \(q>q_0\). There exists a partition
$\bigcup_{i=1}^rE_i$ of $SL_2(\mathbb{F}_q)$
such that for every finite coloring of
$SL_2(\mathbb{F}_q)$ with $t$ colors,
$
SL_2(\mathbb{F}_q) = \bigcup_{j=1}^t F_j,
$
there exist a single color class $F_m$, $1\le m\le t$, and an element \(h\in SL_2(\mathbb{F}_q)\) such that
\[
\forall\, i\in\{1,\ldots, r\},\quad |(F_m\cdot h)\cap E_i|\ge \alpha\,|SL_2(\mathbb{F}_q)|.
\]
\end{theorem}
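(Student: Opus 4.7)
The plan is to leverage the quasirandomness of $SL_2(\mathbb{F}_q)$. By the classical Frobenius degree bound, every non-trivial irreducible representation of $SL_2(\mathbb{F}_q)$ has dimension at least $D:=(q-1)/2$, so in Gowers's terminology the group is $D$-quasirandom, with $|G|\asymp q^3$ while $D\asymp q$. No specific algebraic structure is needed for the partition: I would take any equitable decomposition $G=SL_2(\mathbb{F}_q)=\bigcup_{i=1}^r E_i$ satisfying $|E_i|\geq |G|/(2r)$. Given a $t$-coloring $G=\bigcup_{j=1}^t F_j$, pigeonhole yields an index $m$ with $|F_m|\geq |G|/t$.

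The quantity to control is
\[
N_i(h) := |(F_m\cdot h)\cap E_i| = \sum_{g\in G} \mathbf{1}_{F_m}(g)\,\mathbf{1}_{E_i}(gh) = (\mathbf{1}_{F_m^{-1}}\ast \mathbf{1}_{E_i})(h),
\]
whose average over $h\in G$ equals $|F_m|\,|E_i|/|G|$. The key step is non-abelian Fourier analysis. Using Plancherel's formula, the multiplicativity of the Fourier transform under convolution, the submultiplicative bound $\|AB\|_{\mathrm{HS}}\leq \|A\|_{\mathrm{op}}\|B\|_{\mathrm{HS}}$, and the estimate $\|\widehat{\mathbf{1}_S}(\pi)\|_{\mathrm{op}}^2\leq \|\widehat{\mathbf{1}_S}(\pi)\|_{\mathrm{HS}}^2\leq |G|\,|S|/d_\pi\leq |G|\,|S|/D$ valid for every non-trivial irrep $\pi$ and every $S\subseteq G$, one obtains the variance bound
\[
\sum_{h\in G}\Bigl(N_i(h)-\tfrac{|F_m|\,|E_i|}{|G|}\Bigr)^2 \;\leq\; \frac{|F_m|\,|E_i|\,|G|}{D}.
\]
A Chebyshev-type argument then implies that the number of $h\in G$ with $N_i(h)<|F_m|\,|E_i|/(2|G|)$ is at most $4|G|^3/(D\,|F_m|\,|E_i|)$.

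Summing this count over $i=1,\dots,r$ and inserting the lower bounds $|F_m|\geq |G|/t$ and $|E_i|\geq|G|/(2r)$ gives a total of $O(r^2 t\,|G|/q)$ ``bad'' shifts. Since $|G|\asymp q^3$, this is strictly less than $|G|$ once $q\geq q_0(r,t)$ for a suitable explicit threshold; hence some $h\in G$ is simultaneously good for every partition cell, and for such $h$ one has $N_i(h)\geq |F_m|\,|E_i|/(2|G|)\geq |G|/(4rt)$. The constant $\alpha(r,t):=1/(4rt)$ then finishes the argument. The main obstacle is uniformity: the spectral gain of order $1/D$ from quasirandomness must dominate the union-bound loss of order $r^2 t$ across all partition cells, which is precisely why the conclusion is restricted to sufficiently large $q$ depending on both $r$ and $t$.
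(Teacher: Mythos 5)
Your argument is correct, and it takes a genuinely different route from the paper. The paper proves this by a hands-on geometric argument: it defines the partition cells $E_i$ via the $(1,1)$-entry of the matrix, projects the large color class onto the first two coordinates, and uses a second-moment/Chebyshev count over lines in $\mathbb{F}_q^2$ (Lemma~\ref{lm5.22}) to find a direction $\mathbf v$ and then a specific $h$ built from $\mathbf v$ that works. Your argument instead invokes the $((q-1)/2)$-quasirandomness of $SL_2(\mathbb{F}_q)$ and the standard Gowers mixing estimate: the variance bound
\[
\sum_{h\in G}\Bigl(N_i(h)-\tfrac{|F_m|\,|E_i|}{|G|}\Bigr)^2 \;\leq\; \frac{|F_m|\,|E_i|\,|G|}{D}
\]
follows from Plancherel, the convolution identity, and $\|\widehat{\mathbf 1_S}(\pi)\|_{\mathrm{op}}^2\le |G||S|/d_\pi$, exactly as you outline. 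The arithmetic checks out: the bad shifts for all $r$ cells number at most $8r^2t|G|/D$, which is below $|G|$ once $q>16r^2t+1$, and a good $h$ gives $N_i(h)\ge |G|/(4rt)$.

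What each approach buys: your route is more conceptual, \emph{works for an arbitrary equitable partition} (not just the paper's intervals-in-the-$(1,1)$-entry partition), gives a slightly sharper constant $\alpha=1/(4rt)$ versus the paper's $1/(64rt^2)$, and generalizes verbatim to any sequence of finite groups whose minimal non-trivial irreducible dimension grows, with $\alpha$ depending only on $r$ and $t$. The paper's route is elementary (no representation theory beyond counting), self-contained, and makes the partition and the shift $h$ completely explicit in terms of the matrix entries. One minor presentational point: you should state that the quasirandomness bound $d_\pi\ge (q-1)/2$ for non-trivial $\pi$ is Frobenius's theorem (for odd $q$; for even $q$ the bound is even $q-1$), and note that an equitable partition into $r$ parts of a set of size $|G|=q(q^2-1)\ge 2r$ indeed gives every $|E_i|\ge |G|/(2r)$.
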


The above results illustrate that Raimi-type intersection properties are robust across diverse mathematical structures, from infinite lattices under polynomial shifts to finite groups, both abelian and non-abelian.

Another closely related topic in this area is Hindman's theorem \cite{Hind}, which asserts that for any finite coloring of $\mathbb{N}$ there exists an infinite set $X \subset \mathbb{N}$ such that $\mathrm{FS}(X)$ is monochromatic. Recent work on density refinements of Hindman's theorem can be found in \cite{HKR,Kra}.

It would be interesting to explore whether Theorem \ref{thm3.1} and Theorem \ref{thm3.2} can be applied to other geometric problems in the finite field setting. For instance, they might have implications for the Erdős–Falconer distance problem \cite{hart, IR} or for understanding expansion phenomena in the group $SL_2(\mathbb{F}_q)$ \cite{chapmanI, HH}.

\subsection{Proof ideas and techniques}

Our proofs employ a variety of methods from combinatorics, harmonic analysis, and equidistribution theory. 

For Theorem \ref{HD}, we construct a partition of $\mathbb{N}^k$ using a coloring based on $\mathbb{Q}$-linearly independent real numbers $\alpha_1, \ldots, \alpha_k$. The key is to define $\varphi(\mathbf{a}) = \{\sum_i \alpha_i a_i\} \in (0,1)$ and color points based on which interval in a dyadic partition of $(0,1)$ contains $\varphi(\mathbf{a})$. By choosing $\beta = \alpha_1 + \cdots + \alpha_k$ irrational, we can apply Lemma \ref{lm2.2} to find a sequence whose finite sums land in prescribed narrow intervals modulo $1$, ensuring the desired intersection property.

Theorem \ref{thmpolynomial-main} requires significantly deeper machinery. The main step is to simultaneously control the fractional parts $\{\beta P^{(j)}(h)\}$ for all $j = 1, \ldots, f$. While Weyl's classical theorem handles a single polynomial, we need Theorem \ref{lm2.2'}, which guarantees a set of positive lower density satisfying these constraints simultaneously. The proof relies on Weyl's equidistribution criterion (Theorem \ref{pro31}), Pontryagin duality for locally compact abelian groups (Proposition \ref{lem:annihilator-ker}), and careful analysis of the relation lattice $R$ and the associated subtorus $H \subseteq \mathbb{T}^f$. We show that the sequence $\{\mathbf{v}(n)\}_{n\ge 1}$, $\mathbf{v}(n)= (\{\beta P^{(1)}(n)\}, \ldots, \{\beta P^{(f)}(n)\})$, is equidistributed on $H$ (Lemma \ref{prop:weyl-H}), which provides many choices of $h$ with the required properties.

For the finite group results, the techniques shift to combinatorics and probability. 

Since $G$ is a finite cyclic group, it is sufficient to consider the case $G=\mathbb{Z}_N$. Theorem \ref{thm3.1} uses an iterative refinement argument: we partition $\mathbb{Z}_N$ into nested intervals of decreasing size, apply a density-averaging at each stage, and carefully track how intersections propagate through the hierarchy. 

Theorem \ref{thm3.2} employs a probabilistic method based on Chebyshev's inequality. We project $SL_2(\mathbb{F}_q)$ onto $\mathbb{F}_q^2$, then apply Lemma \ref{lm5.22}, which shows that for any sufficiently large subset $U \subseteq \mathbb{F}_q^2$, there exists a direction such that most lines in that direction contain many points from $U$. We then lift the configuration back to the group using a carefully chosen element $h \in SL_2(\mathbb{F}_q)$. The key is that most lines contain a substantial portion of the projected set, which translates to large intersections in $SL_2(\mathbb{F}_q)$. 

{\bf Notation.} By $\{x_n\}_{n\ge 1}$ we mean a sequence with infinitely many elements $x_1, x_2, \ldots$, and if the sequence has $k$ elements, then we write $\{x_n\}_{n=1}^k$. For any $x\in \mathbb{R}$, let $\{x\}$ stand for the fractional part of $x$. For $A\subseteq\N$, we define the \emph{lower density} of $A$  as $\underline
d(A)=\displaystyle\liminf_{N\to \infty}{|A\cap[1, N]|\over N}$, and the
\emph{upper density} of $A$ as $\overline d(A)=\displaystyle \limsup_{N\to
\infty}{|A\cap[1, N]|\over N}$. If $\underline d(A)=\overline d(A)$, then we say the \emph{density} of $A$ exists and denote it by $d(A):=\underline d(A)=\overline d(A)$.

\textbf{Organization.} The rest of the paper is organized as follows. In Section~2, we establish Theorem~\ref{HD} (higher dimensions with diagonal shifts). In Section~3, we prove Theorem~\ref{thmpolynomial-main} (polynomial shifts). The proof of the key equidistribution result (Theorem~\ref{lm2.2'}) is presented in Subsections~3.1--3.2. Sections~4 and~5 handle the finite cyclic case (Theorem~\ref{thm3.1}) and the non-abelian case \(SL_2(\mathbb{F}_q)\) (Theorem~\ref{thm3.2}), respectively.

\section{Higher dimensions -- Proof of Theorem \ref{HD}}
\subsection{Partition and an induced coloring}
Fix integers $r\ge2$ and $k\ge1$, and real numbers $\alpha_1,\dots,\alpha_k$ such that
$1,\alpha_1,\dots,\alpha_k$ are $\mathbb Q$-independent.
Such numbers exist, say $\alpha_i=\log_2 p_{i+1}$ where $p_2=3<p_3=5<\dots$ are the prime numbers, and $\log_2(\cdot)$ means a logarithm in base two.

Write, for $\mathbf{a}=(a_1,\dots,a_k)\in\N^k$,
\[
\varphi(\mathbf{a}):=\Big\{\sum_{m=1}^k \alpha_m a_m\Big\}\in(0,1),\qquad
\beta:=\alpha_1+\cdots+\alpha_k\notin\mathbb Q.
\]

For $j\ge0$, set
$
S_j:=(1-2^{-j},\,1-2^{-(j+1)}]\subset(0,1),\quad
a_j:=1-2^{-j},\quad w_j:=|S_j|=2^{-(j+1)}.
$

Define
\[
\tau_j:=\frac{|S_j|}{r}=\frac{w_j}{r}.
\]
For $i\in\{1,\dots,r\}$, define the tile
\[
T_{j,i}
:=\Bigl(a_j+(i-1)\,\tau_j,\ \ a_j+i\,\tau_j\Bigr].
\]
These $r$ tiles $T_{j, i}$ form a partition of $S_j$.

\begin{figure}[htbp]
  \centering
  \includegraphics[width=1.0\textwidth]{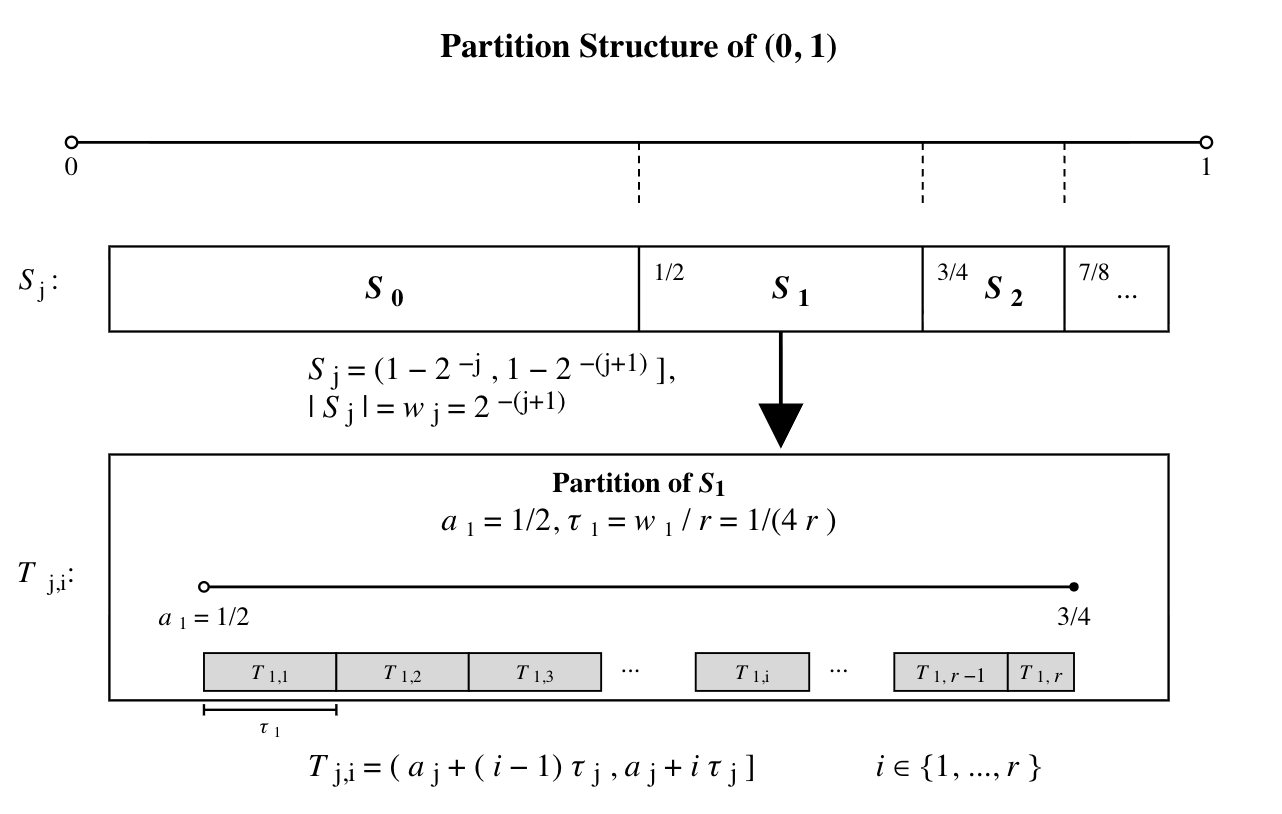}
  \caption{The partition structure of $(0, 1)$ into bands $S_j$ and tiles $T_{j,i}$}
  \label{fig:example}
\end{figure}

\emph{Define the coloring} on $\N^k$ by
\[
\mathrm{Color}(\mathbf{a})=i\in\{1,\dots,r\}
\quad\Longleftrightarrow\quad
\varphi(\mathbf{a})\in T_{j,i}\ \text{for the unique } j\text{ with }\varphi(\mathbf{a})\in S_j.
\]

\smallskip

\subsection{Preliminary lemmas}
\begin{lemma}\label{lm2.1}
Let $\bigcup_{i=1}^tF_i$ be any partition (coloring) of $\N^k$. There exist an $F_m$ and an interval $(x,y)\subseteq [0,1)$ such that  $\varphi(F_m)$ is everywhere dense in $(x, y)$. 
  
\end{lemma}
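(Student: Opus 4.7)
The plan is to combine two standard facts: a density statement for $\varphi(\N^k)$, and a Baire category argument applied to a finite partition.

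First I would show that $\varphi(\N^k)$ is dense in $[0,1)$. This is immediate from Kronecker's (or Weyl's) theorem: since $1, \alpha_1,\dots,\alpha_k$ are $\mathbb{Q}$-independent, the sequence $\{\sum_m \alpha_m a_m\}$, $\mathbf{a}\in\N^k$, is equidistributed (in particular dense) in $[0,1)$. One can get away with only density here, which is the cheaper Kronecker form.

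Next I would use the partition hypothesis. Writing $\N^k=\bigcup_{i=1}^t F_i$ gives $\varphi(\N^k)=\bigcup_{i=1}^t \varphi(F_i)$, so
\[
[0,1]\;\subseteq\;\overline{\varphi(\N^k)}\;=\;\overline{\bigcup_{i=1}^t \varphi(F_i)}\;=\;\bigcup_{i=1}^t \overline{\varphi(F_i)},
\]
where the last equality uses that the union is finite. Each $\overline{\varphi(F_i)}$ is a closed subset of the compact metric space $[0,1]$. By the Baire category theorem, the compact space $[0,1]$ cannot be written as a finite (indeed countable) union of closed nowhere dense sets, so at least one $\overline{\varphi(F_m)}$ must have nonempty interior. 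That interior contains an open interval $(x,y)\subseteq[0,1)$ (shrinking slightly if needed to avoid the endpoint $1$), and by definition of closure, $\varphi(F_m)$ meets every nonempty open subinterval of $(x,y)$, i.e., $\varphi(F_m)$ is everywhere dense in $(x,y)$.

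I do not see a serious obstacle; the only subtle point is the interchange of closure and finite union, which is exactly why one needs the partition to be finite (the statement would fail for a countable partition into singletons). The result used, Baire category on $[0,1]$, is the cleanest route; a purely elementary alternative would be a nested-interval pigeonhole: dyadically refine $(0,1)$ and at each scale pick a subinterval in which $\varphi(F_m)$ has points, but this requires extracting a single color class $m$ that survives in infinitely many scales simultaneously, which is essentially repackaging Baire.
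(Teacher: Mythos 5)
Your argument is correct, and it does take a genuinely different route from the paper. You deduce density of $\varphi(\N^k)$ in $[0,1)$ from Kronecker/Weyl, pass to the finite union $\bigcup_i\overline{\varphi(F_i)}\supseteq[0,1]$, and invoke the Baire category theorem to find one $\overline{\varphi(F_m)}$ with nonempty interior. The paper instead argues by contradiction with a purely finite nested-interval pigeonhole: starting from any interval $(x_0,y_0)\subseteq[0,1)$, if no color class is dense in it, then one can shrink to $(x_1,y_1)$ avoiding $\varphi(F_1)$, then to $(x_2,y_2)\subseteq(x_1,y_1)$ avoiding $\varphi(F_2)$, and so on; after $t$ steps one has a nonempty open interval $(x_t,y_t)$ disjoint from all of $\varphi(\N^k)$, contradicting density. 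Both proofs rest on the same density input, but the paper's version needs only $t$ steps and no topological machinery beyond the existence of a point of $\varphi(\N^k)$ in any interval; Baire is overkill here, though it gives the result cleanly and generalizes to countable partitions whose pieces have closed images. One small correction to your closing remark: the elementary alternative you sketch (and which the paper actually uses) does \emph{not} require extracting a color class that survives infinitely many dyadic scales --- it is a strictly finite induction over the $t$ classes, not a repackaging of Baire.
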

\begin{proof}
    Assume that the opposite holds. Let $(x_0,y_0)\subseteq [0,1)$ be any interval.  Inductively,
let $j\in\{1,2,\ldots,t\}$. Assume that $\{\varphi(\mathbf{x})\colon \mathbf{x}\in
F_j\}$ is not dense in $(x_{j-1},y_{j-1})$, hence there is an interval $(x_j, y_j)$ with  $(x_j,y_j)\subseteq (x_{j-1},y_{j-1})$ and $\{ \varphi(\mathbf{x})\colon x\in F_j\}\cap(x_j,y_j)=\emptyset$. After $t$ many steps, we get that $(x_t,y_t)\bigcap\bigcup_{j=1}^t\{\varphi(\mathbf{x})\colon \mathbf{x}\in F_j\} =\emptyset$, so
$(x_t,y_t)\cap\{\varphi(\mathbf{x})\colon \mathbf{x}\in \N^k\}=\emptyset$. This leads to a contradiction.
\end{proof}

For $x\in\mathbb R$, define
\[
\nnorm{x}:=\min_{n\in\mathbb Z}|x-n|\in[0,\tfrac12].
\]
Equivalently, with the fractional part $\fr{x}\in[0,1)$,
\[
\nnorm{x}=\min\{\fr{x},\,1-\fr{x}\}.
\]

We now recall some basic identities and bounds.
For all $x,y\in\R$ and $n\in\Z$:
\begin{enumerate}
\item \emph{Range and zeros:} $0\le \nnorm{x}\le \tfrac12$, and $\nnorm{x}=0 \iff x\in\Z$.
\item \emph{Periodicity and parity:} $\nnorm{x+n}=\nnorm{x}$ and $\nnorm{-x}=\nnorm{x}$.
\item \emph{Triangle inequality:} $\nnorm{x+y}\le \nnorm{x}+\nnorm{y}$.
\item \emph{Lipschitz property:} $\big|\nnorm{x}-\nnorm{y}\big|\le \nnorm{x-y}$.
\item \emph{Difference via fractional parts:} $\nnorm{\fr{a}-\fr{b}}=\nnorm{a-b}$ for all $a,b\in\R$.
\end{enumerate}

\begin{lemma}\label{lm2.2}
Let $\beta \notin \Q$. For every $\varepsilon>0$ there exists a sequence $X=\{x_n\}_{n\ge 1}$ in $\mathbb{N}$ such that for every $h\in \mathrm{FS}(X)$, we have $\|\beta h\|<\varepsilon$.
\end{lemma}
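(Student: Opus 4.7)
The strategy is a standard geometric decay argument combined with the triangle inequality for $\|\cdot\|$. The crucial observation is that the functional $x \mapsto \|\beta x\|$ is subadditive, so for any finite sum $h = x_{i_1} + \cdots + x_{i_k}$ of elements drawn from a sequence $X$, one has
\[
\|\beta h\| \;\le\; \sum_{j=1}^{k} \|\beta x_{i_j}\| \;\le\; \sum_{n\ge 1} \|\beta x_n\|.
\]
Hence it suffices to construct a sequence $X = \{x_n\}_{n\ge 1}$ of positive integers for which the tail sum $\sum_{n\ge 1}\|\beta x_n\|$ is strictly smaller than $\varepsilon$.

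The irrationality of $\beta$ is what allows such a construction. By Weyl's equidistribution theorem (or, more elementarily, by the classical Kronecker--Weyl density theorem, or even Dirichlet's approximation theorem applied iteratively), for every $\delta > 0$ and every threshold $N \in \mathbb{N}$ there exist infinitely many integers $n > N$ satisfying $\|\beta n\| < \delta$. This quantitative smallness is the only input I would need from the irrationality hypothesis.

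I would then build $X$ inductively. Choose $x_1 \in \mathbb{N}$ with $\|\beta x_1\| < \varepsilon/4$. Assuming $x_1 < x_2 < \cdots < x_{n-1}$ have been selected, invoke the density fact above with $\delta = \varepsilon/2^{n+1}$ and $N = x_{n-1}$ to pick $x_n > x_{n-1}$ satisfying $\|\beta x_n\| < \varepsilon/2^{n+1}$. The inequality $x_n > x_{n-1}$ makes the elements distinct, so $\mathrm{FS}(X)$ is well defined, and the resulting sequence obeys
\[
\sum_{n\ge 1} \|\beta x_n\| \;<\; \sum_{n\ge 1}\frac{\varepsilon}{2^{n+1}} \;=\; \frac{\varepsilon}{2} \;<\; \varepsilon,
\]
so the previous display yields $\|\beta h\| < \varepsilon$ for every $h \in \mathrm{FS}(X)$.

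I do not foresee any real obstacle: the only nontrivial ingredient is the existence, for each prescribed $\delta$, of arbitrarily large positive integers $n$ with $\|\beta n\|<\delta$, and this is immediate from the irrationality of $\beta$. Everything else is bookkeeping via the triangle inequality.
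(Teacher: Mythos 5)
Your proof is correct and follows essentially the same approach as the paper: both choose $x_n$ with $\|\beta x_n\|$ decaying geometrically (the paper uses $\|\beta x_{n}\| < \varepsilon/2^{n}$ with an inductive invariant, you use $\|\beta x_n\| < \varepsilon/2^{n+1}$ and sum the tail directly) and then apply the triangle inequality for $\|\cdot\|$. Your version is a touch cleaner in that it bounds $\sum_n \|\beta x_n\|$ once rather than tracking the invariant step by step, and you explicitly arrange $x_1 < x_2 < \cdots$ to guarantee the sequence has infinitely many distinct terms, a point the paper leaves implicit.
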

\begin{proof}
There is an integer $x_1\in \N$ such that $\|\beta x_1\|<\varepsilon/2$. Assume that the sequence $X_{k}=\{x_1,x_2,\dots ,x_{k}\}$ has been defined in such a way that for every $h\in \mathrm{FS}(X_k)$,  $\|\beta h\|<\varepsilon\left(1-\frac{1}{2^k}\right)$ (this statement is true, e.g., for $k=1$). Now choose an $x_{k+1}\in \N$ for which $\|\beta x_{k+1}\|<\varepsilon/2^{k+1}$. We have
$$
\mathrm{FS}(X_{k+1})=\mathrm{FS}(X_{k})+\{0,x_{k+1}\}.
$$
Thus, if $h\in \mathrm{FS}(X_{k+1})$ then $h=h'+p$, where $p\in \{0,x_{k+1}\}$ and $h'\in \mathrm{FS}(X_{k})$. Hence, \[\|\beta h\|= \| \beta(h'+p)\|\leq \|\beta h'\|+\|\beta p\|<\varepsilon\left(1-\frac{1}{2^k}\right)+\frac{\varepsilon}{2^{k+1}}=\varepsilon\left(1-\frac{1}{2^{k+1}}\right).\]
This completes the proof.
\end{proof}

Let $A\subset\T=\R/\Z$ and $\varepsilon>0$. Note that 
\[
\mathbb T := \mathbb R/\mathbb Z \cong [0,1)\quad\text{(addition mod $1$).}
\]
Define
\[
A_{+\varepsilon}\ :=\ \{\,x\in\T:\ \dist(x,A)<\varepsilon\,\}
\quad\text{and}\quad
A_{-\varepsilon}\ :=\ \{\,x\in A:\ \dist(x,\partial A)>\varepsilon\,\},
\]
where by $\partial A$ we mean the boundary of $A$. 

If $A=(a,b)\subset\T$ is an interval (interpreting endpoints mod $1$), then
\[
(a,b)_{+\varepsilon}=(a-\varepsilon,\ b+\varepsilon),
\qquad
(a,b)_{-\varepsilon}=(a+\varepsilon,\ b-\varepsilon)\ \ (\text{when }b-a>2\varepsilon).
\]

\begin{lemma}\label{lm2.3}
Let $A\subset\T$ be an interval and $\sigma,\gamma\in\T$ with $\nnorm{\gamma-\sigma}<\varepsilon$. Then
\[
(A+\sigma)_{-\varepsilon}\ \subset\ A+\gamma\ \subset\ (A+\sigma)_{+\varepsilon}.
\]
\end{lemma}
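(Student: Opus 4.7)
The statement is a standard continuity/stability fact about interval translates on $\mathbb{T}$. My plan is to prove the two inclusions separately; both follow quickly from the facts that translation on $\mathbb{T}$ is an isometry for $\|\cdot\|$ and that $\|\gamma-\sigma\|<\varepsilon$. The only mild subtlety is the wrap-around on $\mathbb{T}$, but because everything is phrased via the metric $\|\cdot\|$, this causes no issue.

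For the outer inclusion $A+\gamma\subseteq (A+\sigma)_{+\varepsilon}$, I would take an arbitrary $y\in A+\gamma$, write $y=a+\gamma$ with $a\in A$, and then compare $y$ to the witness point $a+\sigma\in A+\sigma$. By translation invariance of $\|\cdot\|$,
\[
\dist(y,\,a+\sigma)\ =\ \nnorm{(a+\gamma)-(a+\sigma)}\ =\ \nnorm{\gamma-\sigma}\ <\ \varepsilon,
\]
so $\dist(y,A+\sigma)<\varepsilon$ and hence $y\in (A+\sigma)_{+\varepsilon}$.

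For the inner inclusion $(A+\sigma)_{-\varepsilon}\subseteq A+\gamma$, I would take $y\in (A+\sigma)_{-\varepsilon}$ and write $y=a'+\sigma$ for some $a'\in A$. The key observation is that translation is an isometry, so $\partial(A+\sigma)=\partial A+\sigma$, and therefore
\[
\dist(a',\partial A)\ =\ \dist(a'+\sigma,\,\partial A+\sigma)\ =\ \dist(y,\partial(A+\sigma))\ >\ \varepsilon.
\]
Now I define $a:=a'+\sigma-\gamma\in\mathbb{T}$. Since $\nnorm{a-a'}=\nnorm{\sigma-\gamma}<\varepsilon$ and since $a'$ lies in $A$ at distance strictly greater than $\varepsilon$ from $\partial A$, the point $a$ is still in $A$ (moving by less than the interior margin cannot cross the boundary; here I am using that $A$ is an interval in $\mathbb{T}$, so it is locally a half-space and the inner-margin characterization is immediate). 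Finally $a+\gamma=a'+\sigma=y$, giving $y\in A+\gamma$.

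The only obstacle worth flagging is checking that the identities $\partial(A+\sigma)=\partial A+\sigma$ and the interior-margin implication ``$\dist(a',\partial A)>\varepsilon$ and $\nnorm{a-a'}<\varepsilon$ implies $a\in A$'' really hold for an interval $A\subseteq\mathbb{T}$, including the degenerate cases where the interval is all of $\mathbb{T}$ or has length at most $2\varepsilon$ (in which case $(A+\sigma)_{-\varepsilon}=\emptyset$ and the inclusion is vacuous). These cases are trivial but should be noted in a single line so the argument is complete.
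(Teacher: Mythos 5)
Your proof is correct and follows essentially the same two-step argument as the paper: for the inner inclusion you pull back along the translation isometry, show the preimage point is interior to $A$ by more than $\varepsilon$, and shift by $\sigma-\gamma$; for the outer inclusion you directly bound the distance from a point of $A+\gamma$ to the corresponding point of $A+\sigma$. The explicit remark about the degenerate case $(A+\sigma)_{-\varepsilon}=\emptyset$ is a small but sensible addition that the paper leaves implicit.
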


\begin{proof}
We first observe that
\[
\dist(u,\partial A)\;=\;\dist(u+\sigma,\partial(A+\sigma))\quad\text{for all }u\in\T.
\]

Take $x\in(A+\sigma)_{-\varepsilon}$. Then $\dist(x,\partial(A+\sigma))>\varepsilon$.
Write $x=u+\sigma$ with $u\in A$. So,
\[
\dist(u,\partial A)=\dist(u+\sigma,\partial(A+\sigma))=\dist(x,\partial(A+\sigma))>\varepsilon.
\]
Since $\nnorm{\gamma-\sigma}<\varepsilon$, the circle point $u+(\sigma-\gamma)$ lies within distance $<\varepsilon$ of $u$,
so $u+(\sigma-\gamma)\in A$. Therefore,
\[
x=(u+(\sigma-\gamma))+\gamma\in A+\gamma.
\]

Take $y\in A+\gamma$, so $y=v+\gamma$ with $v\in A$. From $\nnorm{\gamma-\sigma}<\varepsilon$, we have $v+(\gamma-\sigma)\in A_{+\varepsilon}$. Thus,
\[
y=(v+(\gamma-\sigma))+\sigma\in (A+\sigma)_{+\varepsilon}.
\]
This completes the proof.
\end{proof}

\subsection{Proof of Theorem \ref{HD}}
For $1\le i\le r$, let $E_i$ be the set of $\mathbf{x}\in \mathbb{N}^k$ such that $\mathrm{Color}(\mathbf{x})=i$.

For every finite coloring of $\mathbb{N}^k$ with $t$ colors, $\mathbb N^k=\bigcup_{m=1}^t F_m$. By Lemma~\ref{lm2.1}, choose $m$ and an interval
$J=(x,y)\subset(0,1)$ with $\varphi(F_m)$ dense in $J$. Write $\delta:=y-x>0$.

Choose a band $S_{j^\ast}$ with width $|S_{j^\ast}|=2^{-({j^\ast+1)}}\le \delta/3$. Choose $x_0\in\mathbb N$ so that
\[
J+\sigma\ \supset\ S_{j^\ast}\quad\text{mod }1,\qquad \sigma:=\{x_0\beta\}.
\]
Now choose $\varepsilon>0$ so small that
\[
S_{j^\ast}\ \subset\ (J+\sigma)_{-\varepsilon}=\{u\in J+\sigma:\ \operatorname{dist}(u,\partial(J+\sigma))>\varepsilon\}.
\]
Apply Lemma~\ref{lm2.2} to obtain $\{x_n\}_{n\ge 1}$ with $\|\beta h'\|<\varepsilon$ for all $h'\in \mathrm{FS}(\{x_n\}_{n\ge 1})$.
For any $h=x_0+h'$ (so $h\in x_0+\mathrm{FS}(\{x_n\}_{n\ge 1})$), set $\gamma:=\{h\beta\}$. Then
\[
\|\gamma-\sigma\|=\|\{h\beta\}-\{x_0\beta\}\|=\|h'\beta\|<\varepsilon.
\]
By Lemma \ref{lm2.3}, one has
\[
(J+\sigma)_{-\varepsilon}\ \subset\ J+\gamma\ \subset\ (J+\sigma)_{+\varepsilon},
\]
hence,
\[
J+\gamma\ \supset\ (J+\sigma)_{-\varepsilon}\ \supset\ S_{j^\ast}.
\]
Because $\varphi(\mathbf{x}+h\mathbf{1}_k)=\varphi(\mathbf{x})+\gamma$, we have that $\varphi(F_m+h\mathbf{1}_k)$ is dense in $J+\gamma$,
and in particular intersects each tile $T_{j^*, i}$ inside $S_{j^\ast}$ infinitely often.
By the definition of $E_i$ this gives
\[
(F_m+h\mathbf{1}_k)\cap E_i\ \text{ is infinite for every } i=1,\dots,r.
\]
This holds for every $h\in x_0+\mathrm{FS}(\{x_n\}_{n\ge 1})$, completing the proof.

\section{Polynomial shifts -- Proof of Theorem \ref{thmpolynomial-main}}

We proceed along the same lines as in the proof of Theorem~\ref{HD}, except that we need to modify Lemma~\ref{lm2.2}, as follows. 
\begin{theorem}\label{lm2.2'}
Let $\beta \notin \Q$. Let $P^{(1)},\dots,P^{(f)}\in\Z[x]$ be non-constant polynomials satisfying $P^{(1)}(0)=\cdots=P^{(f)}(0)=0$. For every $\varepsilon>0$, there exists a set $X_{\varepsilon}\subset \mathbb{N}$ of positive lower density such that for each fixed $h\in X_{\varepsilon}$,  for every $i\in \{1,\dots,f\}$, we have $\nnorm{\{\beta P^{(i)}(h)\}}<\varepsilon$. 
\end{theorem}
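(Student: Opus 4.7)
The plan is to identify the orbit closure of the polynomial sequence
\[
\mathbf{v}(n) := \bigl(\beta P^{(1)}(n),\ldots,\beta P^{(f)}(n)\bigr)\in\T^f
\]
as a closed subgroup $H\subseteq\T^f$ containing the origin, and then harvest the return times to a small neighborhood of $\mathbf{0}$ via equidistribution. The hypothesis $P^{(j)}(0)=0$ gives $\mathbf{v}(0)=\mathbf{0}$, and the target set is exactly
\[
X_\varepsilon=\bigl\{n\in\N:\mathbf{v}(n)\in B_\varepsilon\bigr\},\qquad B_\varepsilon:=\{\mathbf{x}\in\T^f:\nnorm{x_j}<\varepsilon\ \forall j\}.
\]

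First, I would introduce the relation lattice
\[
R:=\Bigl\{\mathbf{m}\in\Z^f:\sum_{j=1}^f m_j P^{(j)}(x)\equiv 0\text{ in }\Z[x]\Bigr\}
\]
and let $H:=R^\perp=\{\mathbf{x}\in\T^f:\langle\mathbf{m},\mathbf{x}\rangle\in\Z\text{ for all }\mathbf{m}\in R\}$, a closed (hence compact) subgroup of $\T^f$ that contains $\mathbf{0}$. The whole orbit sits in $H$: for $\mathbf{m}\in R$ one computes $\langle\mathbf{m},\mathbf{v}(n)\rangle=\beta\sum_j m_j P^{(j)}(n)=0\in\Z$.

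Next, I would show that $\{\mathbf{v}(n)\}_{n\ge 1}$ is equidistributed on $H$ with respect to its Haar measure $\mu_H$. By Pontryagin duality (Proposition \ref{lem:annihilator-ker}), every character of $H$ is the restriction of some $\chi_{\mathbf{m}}(\mathbf{x})=e^{2\pi i\langle\mathbf{m},\mathbf{x}\rangle}$, and this restriction is nontrivial exactly when $\mathbf{m}\notin R$. For such $\mathbf{m}$, the polynomial $Q_{\mathbf{m}}(x):=\beta\sum_j m_j P^{(j)}(x)$ is nonzero with $Q_{\mathbf{m}}(0)=0$, and since $\beta\notin\Q$ while $\sum_j m_j P^{(j)}\in\Z[x]\setminus\{0\}$, at least one positive-degree coefficient of $Q_{\mathbf{m}}$ is irrational. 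Weyl's classical criterion (Theorem \ref{pro31}) then gives $\frac1N\sum_{n=1}^N\chi_{\mathbf{m}}(\mathbf{v}(n))\to 0$, which verifies the Weyl criterion for equidistribution on $H$ (Lemma \ref{prop:weyl-H}).

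Finally, since $\mathbf{0}\in H$ and $B_\varepsilon$ is an open neighborhood of $\mathbf{0}$ in $\T^f$, the relatively open set $U_\varepsilon:=B_\varepsilon\cap H$ is nonempty, so $\mu_H(U_\varepsilon)>0$. The Portmanteau inequality for open sets then yields
\[
\underline d(X_\varepsilon)\ge\mu_H(U_\varepsilon)>0,
\]
and by construction each $h\in X_\varepsilon$ satisfies $\nnorm{\beta P^{(i)}(h)}<\varepsilon$ for every $i$. The main obstacle I anticipate is the simultaneous control of all $f$ polynomials: classical Weyl handles any single $\beta P^{(i)}$, but the $\beta P^{(j)}$ may be $\Z$-linearly dependent (for instance if $P^{(2)}=2P^{(1)}$), so the orbit genuinely lives on a proper subtorus. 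The identification $H=R^\perp$ via duality is what pinpoints this subtorus and simultaneously guarantees that $\mathbf{0}$ has positive $\mu_H$-measure in every neighborhood.
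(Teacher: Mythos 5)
Your proposal is correct and follows essentially the same route as the paper: the same sequence $\mathbf v(n)$, the same relation lattice $R$, the same subtorus $H$, Weyl's polynomial criterion for the nontrivial characters, and finally positivity of Haar measure on a neighborhood of $\mathbf 0$. The one genuine (if small) organizational difference is how you establish $\Ann(H)=R$, which is the ingredient needed so that "nontrivial character of $H$" translates into "$\mathbf m\notin R$." The paper introduces the closed subgroup $H'$ generated by the orbit, proves $\Ann(H')=R$ directly from the polynomial identity, and then deduces $H=H'$ via the double--annihilator relation applied to the $\T^f$-side (Proposition~\ref{lem:annihilator-ker}), finally getting $\Ann(H)=R$. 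You instead set $H:=R^\perp$ from the outset and invoke $\Ann(R^\perp)=R$, which is the dual form of the same theorem applied on the $\Z^f$-side (valid since every subgroup of the discrete group $\Z^f$ is closed, so the annihilator map is a bijection between closed subgroups of $\Z^f$ and closed subgroups of $\T^f$). That is a legitimate shortcut, but note that Proposition~\ref{lem:annihilator-ker} as quoted only states the $\T^f$-side version, so if you want to cite it literally you should either argue the bijectivity of $\Ann$ as above or adopt the paper's detour through $H'$. Your closing step replaces the paper's Urysohn/compactness argument with the Portmanteau inequality for open sets applied to $U_\varepsilon=B_\varepsilon\cap H$; this is an equivalent formulation, and the positivity $\mu_H(U_\varepsilon)>0$ holds because Haar measure is positive on nonempty relatively open sets.
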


The case $f=1$ was first proved by Weyl from 1910s.
\begin{lemma}[Weyl \cite{Hweyl1, Hweyl2}]
Let $\beta \notin \Q$ and $P(x) \in \mathbb{Z}[x]$. Let $J \subseteq (0,1)$ be a finite union of intervals. Then the asymptotic density of the set
\[
H = \{n \in \mathbb{N} : \{\beta P(n)\} \in J\}
\]
exists and $d(H) = \mu(J)$. Here $\mu$ is the Lebesgue measure.
\end{lemma}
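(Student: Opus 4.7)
I would follow Weyl's original approach, reducing the density assertion to an equidistribution statement and then to an exponential-sum bound. Since $J$ is a finite union of intervals, $\mathbf 1_J$ is Riemann integrable on $\T=\R/\Z$, so $d(H)=\mu(J)$ for every such $J$ if and only if the sequence $\xi_n:=\{\beta P(n)\}$ is equidistributed modulo $1$. By Weyl's criterion, equidistribution of $(\xi_n)$ is in turn equivalent to
\[
\lim_{N\to\infty}\frac{1}{N}\sum_{n=1}^N e^{2\pi i k\beta P(n)}=0\qquad\text{for every }k\in\Z\setminus\{0\},
\]
so it suffices to prove this Weyl-sum bound (assuming implicitly that $P$ is non-constant, without which the claim fails).

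Next I would induct on $d:=\deg P$. Setting $Q(x):=k\beta P(x)$, the natural inductive statement is: \emph{if $Q\in\R[x]$ has irrational leading coefficient, then $(Q(n)\bmod 1)_{n\ge 1}$ is equidistributed on $\T$.} The leading coefficient of $Q$ is $k\beta a_d$, where $a_d\in\Z\setminus\{0\}$ is the leading coefficient of $P$; since $\beta\notin\Q$ and $k,a_d$ are nonzero integers, $k\beta a_d$ is irrational. For the base case $d=1$, $Q(n)=\alpha n+c$ with $\alpha\notin\Q$, and equidistribution of $\{\alpha n\}$ is the classical Kronecker--Weyl theorem, immediate from the geometric-series bound $|\sum_{n=1}^N e^{2\pi i\alpha n}|\le 2/|1-e^{2\pi i\alpha}|$.

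For the inductive step I would invoke van der Corput's difference theorem: if $(Q(n+h)-Q(n)\bmod 1)_n$ is equidistributed on $\T$ for every fixed $h\ge 1$, then so is $(Q(n)\bmod 1)_n$. (At the level of Weyl sums this is the van der Corput / Weyl differencing inequality
\[
\Bigl|\tfrac1N\sum_{n=1}^N e^{2\pi i Q(n)}\Bigr|^2\ \le\ \tfrac{H+1}{N}+\tfrac{2}{H+1}\sum_{h=1}^H\Bigl(1-\tfrac{h}{H+1}\Bigr)\Bigl|\tfrac1N\sum_{n=1}^{N-h}e^{2\pi i\bigl(Q(n+h)-Q(n)\bigr)}\Bigr|,
\]
applied by sending $N\to\infty$ first and then $H\to\infty$.) For each fixed $h\ge 1$, $Q(x+h)-Q(x)$ is a polynomial in $x$ of degree exactly $d-1$ with leading coefficient $h\cdot d\cdot k\beta a_d$, again irrational since $h,d,a_d\in\Z\setminus\{0\}$. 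The inductive hypothesis applies and yields equidistribution of the differenced sequence, which closes the induction.

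\textbf{Main obstacle.} The delicate point is the bookkeeping inside the differencing step: one must verify at each stage that the leading coefficient of the freshly differenced polynomial remains irrational, which reduces to the observation that the integer multipliers $h$, $d$, $a_d$ that appear are all non-zero. This is where the hypothesis $\beta\notin\Q$ is used essentially. Once the Weyl sums vanish, equidistribution of $(\{\beta P(n)\})$ is immediate, and the density statement $d(H)=\mu(J)$ follows by applying equidistribution to $\mathbf 1_J$, first for a single interval and then by finite additivity across the constituent intervals of $J$.
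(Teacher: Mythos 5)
The paper does not supply a proof of this lemma; it is recorded purely as a citation to Weyl's classical equidistribution theorem (see Theorem~\ref{pro31} for the form actually used in the paper's own arguments). So there is no paper proof to compare against, and your proposal is to be judged on its own merits.

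Your sketch is the standard and correct route: since $J$ is a finite union of intervals, $\mathbf{1}_J$ is Riemann integrable on $\T$, so the existence of $d(H)$ together with the identity $d(H)=\mu(J)$ is equivalent to equidistribution of $\{\beta P(n)\}$; Weyl's criterion reduces this to the decay of the exponential sums $\tfrac1N\sum_{n\le N}e^{2\pi i k\beta P(n)}$; induction on $\deg P$ with van der Corput differencing finishes the argument, the leading coefficient $k\beta a_d$ being irrational at every stage because the integer multipliers $k$, $a_d$, $d$, $h$ appearing in the differenced leading coefficient are all nonzero. Two small remarks. First, you correctly flag that the lemma as printed omits the hypothesis that $P$ is non-constant, without which the conclusion fails (a constant $P$ gives a constant sequence of fractional parts); this is a genuine imprecision in the paper's statement. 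Second, your inductive hypothesis (leading coefficient irrational) is a slightly more restrictive version than the one the paper later quotes as Theorem~\ref{pro31} (some positive-degree coefficient irrational), but it is the cleanest version to carry through van der Corput differencing, and it is all that is needed here since $k\beta a_d$ is automatically irrational. The stated van der Corput inequality has the right shape; the precise normalization constants are not important since the estimate is applied with $N\to\infty$ first and then $H\to\infty$.
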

For $f \geq 2$, a weaker version of Theorem~\ref{lm2.2'} can be deduced from the works of Schmidt \cite{schmidt} and Maynard \cite{JM}. More precisely, they proved that 
for every $x\ge1$, there is $h<x$ satisfying the inequality \;
$\max_i\big\|\{\beta P^{(i)}(h)\}\big\|\ \le\ C\,x^{-\theta},$
where the constants $C$ and $\theta$ depend on the polynomials $P^{(i)}$. This implies that there exists \emph{at least one} $h \in \mathbb{N}$ such that
$\left\| \{\beta P^{(i)}(h)\} \right\| < \varepsilon$ for every $i$.
If, in addition, we also assume that the polynomials $P^{(i)}$ have no common positive integer root, then it is not hard to deduce that there are \emph{infinitely many} values of $h$ meeting the requirements. Our Theorem~\ref{lm2.2'} guarantees the existence of a set of such values $h$ with \emph{positive lower density}, without any additional assumption.
\smallskip

Taking the above modification into account, 
we outline the proof of Theorem \ref{thmpolynomial-main}, for com\-plete\-ness.
\begin{proof}[Proof of Theorem \ref{thmpolynomial-main}]
By Lemma~\ref{lm2.1}, choose $m$ and an interval $J=(x,y)\subset(0,1)$ such that $\varphi(F_m)$ is dense in $J$.
Write $\delta:=y-x>0$.

Choose a band $S_{j^\ast}$ with width $|S_{j^\ast}|=2^{-(j^\ast+1)}\le \delta/3$.
Pick $x_0\in\mathbb N$ and put $\sigma:=\{x_0\beta\}$, so that
\[
  J+\sigma\ \supset\ S_{j^\ast}\quad\text{mod }1.
\]
Choose $\varepsilon>0$ so small that
\[
  S_{j^\ast}\ \subset\ (J+\sigma)_{-\varepsilon}
  :=\{u\in J+\sigma:\ \dist(u,\partial(J+\sigma))>\varepsilon\}.
\]

It follows from Theorem \ref{lm2.2'} that there exists a set $X_{\varepsilon}\subset \mathbb{N}$ of positive lower density such that for any $h\in X_{\varepsilon}$ one has
\[
  \|\{\beta P^{(j)}(h)\}\|<\varepsilon \quad\text{for all } j=1,\dots,f.
\]

Moreover, since the leading coefficient of each $P^{(j)}$ is positive,
we may choose $h$ large enough to make sure that $P^{(j)}(h)\ge 0$ for all $j$. The set of $h$ satisfying this condition still has positive lower density.

For each fixed $j$, set
\[
  \gamma_j':=\sigma+ \{\beta P^{(j)}(h)\}\mod 1,
\]
Then $\|\gamma_j'-\sigma\|=\|\{\beta P^{(j)}(h)\}\|<\varepsilon$, and by Lemma~\ref{lm2.3},
\[
  (J+\sigma)_{-\varepsilon}\ \subset\ J+\gamma_j'\ \subset\ (J+\sigma)_{+\varepsilon}.
\]
In particular,
\[
  J+\gamma_j'\ \supset\ (J+\sigma)_{-\varepsilon}\ \supset\ S_{j^\ast}.
\]

Since $P^{(j)}(h)\ge 0$, the translate $F_m+(x_0+P^{(j)}(h))\mathbf{1}_k$ lies in $\mathbb N^k$, and
\[
  \varphi\!\big(\mathbf{x}+(x_0+P^{(j)}(h))\mathbf{1}_k\big)
  \;=\;\varphi(\mathbf{x})+\gamma_j' \mod 1.
\]
Because $\varphi(F_m)$ is dense in $J$, it follows that $\varphi(F_m+(x_0+P^{(j)}(h))\mathbf{1}_k)$
is dense in $J+\gamma_j'$, hence intersects each tile $T_{j^\ast,i}\subset S_{j^\ast}$ infinitely many times.
By the definition of the color classes $E_i$, we conclude that
\[
  \big(F_m+(x_0+P^{(j)}(h))\mathbf{1}_k\big)\cap E_i
  \quad\text{is infinite for every } i=1,\dots,r.
\]
This holds for every $j=1,\dots,f$ at the same $h$.
Since there are infinitely many such $h$,
the theorem follows. \end{proof}
\subsection{Proof of Theorem \ref{lm2.2'}}
We first recall  Weyl’s equidistribution criteria in \cite{book3}.

\begin{itemize}
    \item One-dimensional torus $\T$: A sequence $\{x_n\}_{n\ge 1}$, with $x_n\in \T$, is equidistributed iff
\[
 \lim_{N\to\infty} \frac{1}{N}\sum_{n=1}^{N} e^{2\pi i k x_n}\;=\;0
  \qquad\text{for every } k\in \Z\setminus\{0\}. \quad\quad (\cite[\mbox{Theorem 2.1, Chapter 1}]{book3}) 
\]

\item Multidimensional torus $\T^f$: A sequence $\{\mathbf x_n\}_{n\ge 1}$, with $\mathbf{x}_n\in \T^f$, is equidistributed iff
\[
  \lim_{N\to\infty}\frac{1}{N}\sum_{n=1}^{N} e^{2\pi i\, \mathbf k\cdot \mathbf x_n}\;=\;0
  \qquad\text{for every } \mathbf k\in \Z^f\setminus\{\mathbf 0\}.\quad\quad (\cite[\mbox{Theorem 6.2, Chapter 1}]{book3})
\]

\item For a closed subgroup $H\le \T^f$ (subtorus version): Let $\{\mathbf x_n\}_{n\ge 1}\subset H$. Then $\{\mathbf x_n\}_{n\ge 1}$ is equidistributed \emph{on $H$} iff for every
non-trivial character $\chi$ of $H$,
\[
  \lim_{N\to\infty}\frac{1}{N}\sum_{n=1}^{N} \chi(\mathbf x_n)\;=\;0. \quad\quad (\cite[\mbox{Corollary 1.2, Chapter 4}]{book3})
\]
Note that every character of $H$ is of the form
\[
  \chi_{\mathbf m}(\mathbf z) \;=\; e^{2\pi i\, \mathbf m\cdot \mathbf z},
  \qquad \mathbf m\in \Z^f. \quad\quad (\cite[\mbox{Appendix C.3}]{EW})
\]
By non-trivial on $H$, we mean that $\mathbf m$ is not in the annihilator 
$$\Ann(H):=\; \big\{\,\mathbf m\in \Z^f \;:\; \chi_{\mathbf m}\!\restriction_{H}\equiv 1 \,\big\}.$$ So the criterion becomes
\begin{equation}\label{WEC}
  \lim_{N\to \infty}\frac{1}{N}\sum_{n=1}^{N} e^{2\pi i\, \mathbf m\cdot \mathbf x_n}\;=\;0
  \qquad\text{for all } \mathbf m\in \Z^f\setminus \Ann(H).
\end{equation}
\end{itemize}

Let $P^{(1)},\dots,P^{(f)}\in\Z[x]$ with $P^{(1)}(0)=\cdots=P^{(f)}(0)=0$, and let $\beta\in\R\setminus\Q$.
Define
\[
\mathbf v(n):=\big(\{\beta P^{(1)}(n)\},\dots,\{\beta P^{(f)}(n)\}\big)\in\T^f,
\qquad n\in\N,
\]
the \textit{relation lattice}
\[
R:=\Big\{\,\mathbf{m}=(m_1,\dots,m_f)\in\Z^f:\ Q_{\mathbf{m}}:=\sum_{i=1}^f m_i P^{(i)} \equiv 0 \text{ in }\Z[x]\,\Big\},
\]
the relation-defined subtorus
\[
H:=\{\,\mathbf{z}\in\T^f:\ e^{2\pi i\, \mathbf{m}\cdot \mathbf{z}}=1\ \text{ for all }\mathbf{m}\in R\,\},
\]
and the closed subgroup generated by the sequence $\{\mathbf v(n)\}_{n\ge 1}$
\[
H':=\overline{\ \langle\,\mathbf v(n):n\in\N\,\rangle\ } \ \le\ \T^f
\quad(\text{closure of all finite $\Z$-linear combinations}).
\]
Note that closure of a subgroup is a subgroup.

The proof of Theorem \ref{lm2.2'} makes use of the following lemmas.
\begin{lemma}\label{lem:H=Hprime}
We have \(H=H'\), that is, the relation-defined subtorus $H$ equals the closed subgroup generated by the sequence $\{\mathbf v(n)\}_{n\ge 1}$.
\end{lemma}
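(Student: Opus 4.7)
The plan is to prove the two inclusions $H' \subseteq H$ and $H \subseteq H'$ separately, using Pontryagin duality (Proposition~\ref{lem:annihilator-ker}) to convert the second inclusion into a computation of annihilators.

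For the easy direction $H' \subseteq H$, I would simply evaluate the defining characters of $H$ on each generator $\mathbf{v}(n)$. For any $\mathbf{m} \in R$, by definition $Q_{\mathbf{m}} = \sum_i m_i P^{(i)} \equiv 0$, so
\[
\mathbf{m} \cdot \mathbf{v}(n) \;=\; \sum_{i=1}^f m_i\{\beta P^{(i)}(n)\} \;\equiv\; \beta\sum_{i=1}^f m_i P^{(i)}(n) \;=\; \beta Q_{\mathbf{m}}(n) \;=\; 0 \pmod 1.
\]
Hence $e^{2\pi i\,\mathbf{m}\cdot\mathbf{v}(n)} = 1$ for every $n$ and every $\mathbf{m}\in R$, so $\mathbf{v}(n)\in H$. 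Since $H$ is a closed subgroup of $\mathbb{T}^f$, it contains the closure of the generated subgroup, giving $H'\subseteq H$.

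For the reverse inclusion, the strategy is to compute $\Ann(H')$ and show it coincides with $\Ann(H) = R$; then by Pontryagin duality for closed subgroups of $\mathbb{T}^f$, both $H$ and $H'$ equal $\Ann(\Ann(H)) = \Ann(\Ann(H'))$, yielding $H = H'$. To compute $\Ann(H')$: a vector $\mathbf{m}\in\Z^f$ lies in $\Ann(H')$ iff $\chi_{\mathbf{m}}\equiv 1$ on $H'$, and by continuity this happens iff $\chi_{\mathbf{m}}(\mathbf{v}(n)) = 1$ for every $n\in\N$, i.e., iff $\beta Q_{\mathbf{m}}(n)\in\Z$ for every $n\in\N$.

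The main (though mild) obstacle is turning the pointwise integrality condition into the polynomial identity $Q_{\mathbf{m}}\equiv 0$. Since $Q_{\mathbf{m}}\in\Z[x]$, the value $Q_{\mathbf{m}}(n)$ is an integer, and if $Q_{\mathbf{m}}(n)\neq 0$ then $\beta Q_{\mathbf{m}}(n)$ is a nonzero integer multiple of the irrational $\beta$ and therefore not in $\Z$. Consequently $\beta Q_{\mathbf{m}}(n)\in\Z$ for all $n\in\N$ forces $Q_{\mathbf{m}}(n) = 0$ for every $n\in\N$; a polynomial with infinitely many zeros is identically zero, so $Q_{\mathbf{m}}\equiv 0$ in $\Z[x]$ and $\mathbf{m}\in R$. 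The reverse implication $R\subseteq \Ann(H')$ is immediate from the first paragraph. Thus $\Ann(H') = R = \Ann(H)$, and Pontryagin duality closes the proof.
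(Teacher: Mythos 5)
Your argument is correct and follows essentially the same route as the paper: you compute $\Ann(H')=R$ (using the same key observation, that $\beta Q_{\mathbf m}(n)\in\Z$ for all $n$ together with $\beta\notin\Q$ forces $Q_{\mathbf m}\equiv 0$), and then invoke Pontryagin duality (Proposition~\ref{lem:annihilator-ker}) to conclude. The paper packages the computation of $\Ann(H')$ as a separate Proposition~\ref{lm3.11} and then applies the duality once, to $K=H'$, obtaining $H'=\{\mathbf z:\chi_{\mathbf m}(\mathbf z)=1\ \text{for all}\ \mathbf m\in\Ann(H')\}=\{\mathbf z:\chi_{\mathbf m}(\mathbf z)=1\ \text{for all}\ \mathbf m\in R\}=H$ directly.

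One small point worth tightening: you assert $\Ann(H)=R$ without proof and then apply the double-annihilator identity to both $H$ and $H'$. The equality $\Ann(H)=R$ does hold, but it deserves a word: $R\subseteq\Ann(H)$ is immediate from the definition of $H$, and your first paragraph ($H'\subseteq H$) gives $\Ann(H)\subseteq\Ann(H')=R$ since annihilators reverse inclusions. Alternatively, you can avoid the issue entirely by using the paper's shortcut, applying Proposition~\ref{lem:annihilator-ker} only to $K=H'$, which already yields $H'=H$ without any separate statement about $\Ann(H)$.
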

\begin{lemma}\label{prop:weyl-H}
The sequence $\{\mathbf v(n)\}_{n\ge 1}$ is equidistributed on $H$ with respect to Haar measure $m_H$.
\end{lemma}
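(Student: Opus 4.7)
The plan is to apply the subtorus form of Weyl's equidistribution criterion, equation~(\ref{WEC}), to the sequence $\{\mathbf{v}(n)\}_{n\ge 1}$ inside $H$. First, I would verify that $\mathbf{v}(n)\in H$ for every $n$. This is immediate from $H=H'$ (Lemma~\ref{lem:H=Hprime}), but it can also be checked directly: for any $\mathbf{m}\in R$,
$$
\mathbf{m}\cdot\mathbf{v}(n)\ \equiv\ \sum_{i=1}^f m_i\,\beta P^{(i)}(n)\ =\ \beta\, Q_{\mathbf{m}}(n)\ \equiv\ 0\pmod 1,
$$
since $Q_{\mathbf{m}}\equiv 0$ in $\mathbb{Z}[x]$, hence $e^{2\pi i\,\mathbf{m}\cdot\mathbf{v}(n)}=1$.

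Next, I would identify $\Ann(H)$ with $R$ via Pontryagin duality, invoking Proposition~\ref{lem:annihilator-ker}. By construction $H$ is the joint kernel of the characters $\{\chi_{\mathbf{m}}:\mathbf{m}\in R\}$ of $\mathbb{T}^f$, and the annihilator--kernel duality for closed subgroups yields $\Ann(H)=R$ (the subgroup $R\le \mathbb{Z}^f$ is automatically closed in the discrete topology, so no further reduction is needed). With this identification, criterion~(\ref{WEC}) reduces the lemma to the assertion
$$
\lim_{N\to\infty}\frac{1}{N}\sum_{n=1}^{N} e^{2\pi i\,\beta Q_{\mathbf{m}}(n)}\ =\ 0 \qquad \text{for every } \mathbf{m}\in \mathbb{Z}^f\setminus R,
$$
where I have used $e^{2\pi i x}=e^{2\pi i\{x\}}$ together with the congruence computed above.

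Finally, for $\mathbf{m}\notin R$ the polynomial $Q_{\mathbf{m}}\in\mathbb{Z}[x]$ is non-zero by the definition of the relation lattice, and the hypothesis $P^{(i)}(0)=0$ forces $Q_{\mathbf{m}}(0)=0$, so $\deg Q_{\mathbf{m}}\ge 1$. Consequently, $\beta Q_{\mathbf{m}}(x)$ is a polynomial whose leading coefficient is $\beta$ times a non-zero integer, hence irrational. The one-dimensional case of Weyl's polynomial equidistribution theorem (the scalar instance of Theorem~\ref{pro31}) then delivers the required vanishing of the exponential sum, and the lemma follows. The only step that does genuine work is the duality identification $\Ann(H)=R$; once that is settled, the multivariate criterion on $H$ collapses to the classical one-variable Weyl theorem applied to a polynomial with an irrational leading coefficient, and the remainder of the argument is mechanical.
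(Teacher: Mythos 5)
Your proof is correct and reaches the same end state, but it takes a genuinely different route to the key identification $\Ann(H)=R$. The paper proceeds in two steps: Proposition~\ref{lm3.11} first computes $\Ann(H')=R$ by a hands-on argument (if $\chi_{\mathbf m}$ kills every $\mathbf v(n)$ then $\beta Q_{\mathbf m}(n)\in\Z$ for all $n$, which with $\beta\notin\Q$ forces $Q_{\mathbf m}\equiv 0$), and then Lemma~\ref{lem:H=Hprime} invokes the ``$K=\operatorname{Ker}(\Ann(K))$'' direction of duality to upgrade this to $H=H'$, whence $\Ann(H)=R$. You instead apply the duality in the opposite direction: from $H=\operatorname{Ker}(R)$ and the fact that $R$ is (automatically) a closed subgroup of the discrete group $\Z^f$, the double-annihilator bijection gives $\Ann(H)=\Ann(\operatorname{Ker}(R))=R$ in one stroke, with no need for $H'$, Proposition~\ref{lm3.11}, or Lemma~\ref{lem:H=Hprime}. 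This is shorter, but be aware that the statement of Proposition~\ref{lem:annihilator-ker} as quoted in the paper only records the direction $K=\operatorname{Ker}(\Ann(K))$ for closed $K\le\T^f$; your argument needs the symmetric statement with the roles of $\T^f$ and $\Z^f$ interchanged, which does hold (Folland's Prop.\ 4.39 in its full form, by applying the one-sided statement to the dual group) but is not literally what the paper's Proposition~\ref{lem:annihilator-ker} asserts. Once $\Ann(H)=R$ is in hand, the rest of your argument --- direct verification that $\mathbf v(n)\in H$, the reduction via the criterion~(\ref{WEC}) to the exponential sums over $\beta Q_{\mathbf m}$, the observation that $Q_{\mathbf m}$ is nonconstant with $Q_{\mathbf m}(0)=0$ for $\mathbf m\notin R$ so that $\beta Q_{\mathbf m}$ has an irrational leading coefficient, and the appeal to Theorem~\ref{pro31} --- coincides with the paper's proof.
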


We now prove Theorem \ref{lm2.2'}.
\begin{proof}[Proof of Theorem \ref{lm2.2'}]
    Consider the open box
\[
U:=(-\varepsilon,\varepsilon)^f\subset\T^f.
\]
Since $\mathbf{0}\in H$ and $U$ is a neighborhood of $\mathbf{0}$, we have $m_H(U\cap H)>0$ (see \cite[page 341]{book0}).

By Lemma~\ref{prop:weyl-H}, there exist infinitely many $h\in\N$ with
\[
\mathbf v(h)=\big(\{\beta P^{(1)}(h)\},\dots,\{\beta P^{(f)}(h)\}\big)\in U\cap H.
\]
To see this, we argue as follows. 

We know that $H$ is a closed subgroup of $\mathbb{T}^f$ (from Lemma \ref{lem:H=Hprime} or its definition), so it is locally compact. Thus, the measure $m_H$ is regular (for this fact, see, for example, \cite[Chapter 2]{book0} or \cite[Chapter 44]{fre}). Assume that $m_H(U\cap H)=c>0$, then we can find a compact set $K\subset U\cap H$ such that $m_H(K)\ge c/2$. Notice that $H$ is compact Hausdorff, and therefore normal, so one can apply Urysohn's lemma (see, for example, \cite[Theorem 33.1]{munkres2000}) to find a continuous function $F\colon H\to [0, 1]$ such that $
\mathbf{1}_K \le F \le \mathbf{1}_{U\cap H}.$
This implies
\[
\int_H F\,dm_H \ge \int_H \mathbf{1}_K\,dm_H = m_H(K) \ge c/2.
\]
On the other hand, from the equidistribution of $\mathbf{v}(h)$ in $H$, we know that
\[
\lim_{N\to \infty}\frac{1}{N}\sum_{h=1}^N F(\mathbf{v}(h)) =\int_H F\,dm_H \ge c/2.
\]
Since $\mathbf{1}_{U\cap H}(\mathbf{v}(h)) \ge F(\mathbf{v}(h))$ for all $h$, we have:
\[
\frac{1}{N}\sum_{h=1}^N \mathbf{1}_{U\cap H}(\mathbf{v}(h)) \ge \frac{1}{N}\sum_{h=1}^N F(\mathbf{v}(h)).
\]
So
\[
\liminf_{N\to\infty} \frac{1}{N}\,\#\{h \le N: \mathbf{v}(h) \in U\cap H\} \ge c/2 > 0.
\]
This implies the existence of the set $X_\varepsilon\subset \mathbb{N}$ with positive lower density such that for each fixed $h\in X_\varepsilon$,  for every $i\in \{1,\dots,f\}$, we have $\nnorm{\{\beta P^{(i)}(h)\}}<\varepsilon$. Note that the density of $X_\varepsilon$ depends on $\varepsilon$ and polynomials.
\end{proof}
\subsection{Proofs of Lemmas \ref{lem:H=Hprime} and \ref{prop:weyl-H}}
We first recall some results from \cite{book0, book3}.

The following is known as the 1-dimensional form of Weyl's polynomial equidistribution criterion.
\begin{theorem}[Theorem 3.2, Chapter 1, \cite{book3}]\label{pro31}
    If \(Q(x)=c_d x^d+\cdots+c_1 x+c_0\in\mathbb R[x]\) is a non-constant polynomial
and at least one coefficient \(c_j\), $j>0$, is irrational, then
\[
  \lim_{N\to\infty}\frac{1}{N}\sum_{n=1}^{N} e^{2\pi i\, Q(n)} \;=\; 0 .
\]
Equivalently, the sequence $\{Q(n)\}_{n\ge 1}$ is uniformly distributed mod \(1\).
\end{theorem}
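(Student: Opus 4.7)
The plan is to establish the exponential sum bound
\[
\frac{1}{N}\sum_{n=1}^{N} e^{2\pi i Q(n)}\ \longrightarrow\ 0,
\]
from which uniform distribution of $\{Q(n)\}_{n\ge 1}$ mod $1$ follows via the one-dimensional Weyl criterion (the first bullet in the preceding list) by applying the same bound to $kQ$ for every $k\in\Z\setminus\{0\}$. This replacement preserves the hypothesis, since $c_j\notin\Q$ and $k\ne 0$ imply $kc_j\notin\Q$, so it suffices to prove the sum estimate for $Q$ itself.

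I would proceed by induction on $d=\deg Q$. The base case $d=1$ is direct: if $Q(x)=c_1 x+c_0$ with $c_1\notin\Q$, then
\[
\sum_{n=1}^{N} e^{2\pi i Q(n)} = e^{2\pi i c_0}\sum_{n=1}^{N}\bigl(e^{2\pi i c_1}\bigr)^{n}
\]
is a geometric sum with ratio $e^{2\pi i c_1}\ne 1$, hence bounded independently of $N$, yielding the claim upon division by $N$.

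For the inductive step with $d\ge 2$, let $j\in\{1,\dots,d\}$ be the \emph{largest} index with $c_j\notin\Q$, which exists by hypothesis. When $j\ge 2$, I would apply van der Corput's differencing inequality. Setting $\Delta_h Q(x):=Q(x+h)-Q(x)$, a direct expansion shows that the coefficient of $x^{j-1}$ in $\Delta_h Q$ equals $j h c_j+\rho(h)$, for some rational quantity $\rho(h)$, because all $c_{j+1},\dots,c_d$ are rational by the maximality of $j$. Hence $\Delta_h Q$ has degree $d-1$ and an irrational coefficient at the positive-degree monomial $x^{j-1}$, so the inductive hypothesis gives $\frac{1}{N}\sum_n e^{2\pi i \Delta_h Q(n)}\to 0$ for every fixed $h\ge 1$. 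Van der Corput's inequality applied to $z_n=e^{2\pi i Q(n)}$, for which $\overline{z_n}\,z_{n+h}=e^{2\pi i \Delta_h Q(n)}$, then transfers this to $\frac{1}{N}\sum_n e^{2\pi i Q(n)}\to 0$ upon letting $N\to\infty$ and then $H\to\infty$.

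The remaining subcase $j=1$ is the main obstacle: here every positive-degree coefficient of $\Delta_h Q$ depends only on $c_2,\dots,c_d$, all rational, so $\Delta_h Q$ modulo $1$ takes only finitely many values on $\N$, and the inductive step does not close. My workaround is a residue decomposition. Choose $M\in\N$ with $Mc_i\in\Z$ for every $i\ge 2$, and split $n=Mm+r$ with $r\in\{0,\dots,M-1\}$. A direct calculation — using that $c_i\bigl((Mm+r)^i-r^i\bigr)\in\Z$ for each $i\ge 2$ — yields $Q(Mm+r)\equiv(Mc_1)m+\alpha_r\pmod{1}$ for some constant $\alpha_r$ depending only on $r$. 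Hence $\sum_{n=1}^{N} e^{2\pi i Q(n)}$ splits into $M$ geometric subsums in $m$ with common ratio $e^{2\pi i M c_1}\ne 1$, each bounded by an absolute constant, and dividing by $N$ gives the required decay. This completes the induction and, by the opening reduction, the theorem.
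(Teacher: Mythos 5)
Your proof is correct, and the paper itself does not prove this statement but simply cites it as Theorem~3.2, Chapter~1 of Kuipers--Niederreiter. Your argument uses the same two tools as the standard textbook proof (van der Corput's difference theorem and decomposition into residue classes modulo a common denominator of the rational coefficients), but organizes them slightly differently: Kuipers--Niederreiter first prove the case where the \emph{leading} coefficient is irrational by a clean van~der~Corput induction, and then reduce the general case ($j<d$) to that one via residue decomposition modulo $q$, obtaining on each class a degree-$j$ polynomial with irrational leading coefficient; you instead run a single induction on the degree $d$, use van~der~Corput whenever the top irrational index satisfies $j\ge 2$ (noting that $\Delta_h Q$ then has an irrational coefficient at degree $j-1\ge 1$), and invoke the residue trick only in the terminal subcase $j=1$, where it collapses to a geometric sum. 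Both are complete; yours avoids stating the leading-coefficient case as a separate lemma at the cost of needing the slightly more careful observation that the irrational coefficient of $\Delta_hQ$ sits at degree $j-1$ rather than at the top. All the computations check out: the coefficient of $x^{j-1}$ in $\Delta_h Q$ is $jhc_j$ plus a rational quantity by maximality of $j$, and in the $j=1$ case the identity $c_i\bigl((Mm+r)^i-r^i\bigr)\in\Z$ for $i\ge 2$ holds because $(Mm+r)^i-r^i\in M\Z[m]$ and $Mc_i\in\Z$.
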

Since $\T^f$ is a locally compact abelian group, the next proposition follows by Pontryagin duality.
\begin{proposition}[Proposition 4.39, \cite{book0}]\label{lem:annihilator-ker}
Let $K\le \T^f$ be a closed subgroup and
\[
\Ann(K):=\{\,\mathbf{m}\in\Z^f:\ e^{2\pi i\, \mathbf{m}\cdot \mathbf{z}}=1\ \text{ for all }\mathbf{z}\in K\,\}.
\]
Then
\begin{equation}\label{eq:annihilator-ker}
K \;=\; \big\{\, \mathbf{z}\in\T^f:\ e^{2\pi i\, \mathbf{m}\cdot \mathbf{z}}=1\ \text{ for all } \mathbf{m}\in \Ann(K)\,\big\}.
\end{equation}
\end{proposition}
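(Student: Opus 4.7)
The plan is to prove the two inclusions in \eqref{eq:annihilator-ker} separately; one direction is immediate from the definition of $\Ann(K)$, and the other reduces to the point-separation property of characters on locally compact Hausdorff abelian groups.

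The inclusion $K \subseteq \{\mathbf{z}\in\T^f : e^{2\pi i\,\mathbf{m}\cdot\mathbf{z}}=1\ \text{for all}\ \mathbf{m}\in\Ann(K)\}$ is immediate: by the very definition of $\Ann(K)$, every $\mathbf{m}\in\Ann(K)$ annihilates every element of $K$. For the reverse inclusion I would argue contrapositively. Fix $\mathbf{z}_0\in\T^f\setminus K$, and aim to produce an $\mathbf{m}\in\Ann(K)$ with $e^{2\pi i\,\mathbf{m}\cdot\mathbf{z}_0}\neq 1$. Since $K$ is a closed subgroup of the compact Hausdorff abelian group $\T^f$, the quotient $Q:=\T^f/K$ is itself compact, Hausdorff, and abelian, and the coset $\mathbf{z}_0+K$ is distinct from the identity of $Q$. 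Pontryagin duality then supplies a continuous character $\psi\colon Q \to \T$ with $\psi(\mathbf{z}_0+K)\neq 1$, and pulling back through the quotient map $\pi\colon\T^f\to Q$ yields a continuous character $\chi:=\psi\circ\pi$ of $\T^f$ that is trivial on $K$ yet satisfies $\chi(\mathbf{z}_0)\neq 1$.

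The last step is the identification $\widehat{\T^f}\cong\Z^f$: every continuous character of $\T^f$ has the form $\chi_{\mathbf{m}}(\mathbf{z})=e^{2\pi i\,\mathbf{m}\cdot\mathbf{z}}$ for a unique $\mathbf{m}\in\Z^f$, a standard fact that follows from $\widehat{\T}\cong\Z$ together with the compatibility of Pontryagin duality with finite products. Writing $\chi=\chi_{\mathbf{m}}$, the condition $\chi|_K\equiv 1$ forces $\mathbf{m}\in\Ann(K)$, while $\chi(\mathbf{z}_0)\neq 1$ delivers the required inequality.

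The only genuinely nontrivial ingredient, and hence the main obstacle to a self-contained proof, is Pontryagin duality in the form of the statement that the continuous characters of a locally compact Hausdorff abelian group separate points; the rest is definition chasing. One could also bypass the general LCA machinery here, since $Q$ is compact: the required separation then reduces to elementary Fourier analysis on compact abelian groups (Peter--Weyl, or a direct Stone--Weierstrass argument applied to the algebra of trigonometric polynomials on $Q$).
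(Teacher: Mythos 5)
The paper offers no proof of this proposition; it cites Proposition~4.39 of Folland \cite{book0} and remarks only that it follows from Pontryagin duality, so there is no in-paper argument to compare against. Your argument is correct and is essentially the standard proof of the annihilator--biduality relation specialized to $K\le\T^f$: the forward inclusion is definitional; for the reverse, you pass to the compact Hausdorff quotient $Q=\T^f/K$ (Hausdorff exactly because $K$ is closed), invoke point-separation by characters to produce $\psi\in\widehat{Q}$ with $\psi(\mathbf{z}_0+K)\neq 1$, pull back along the projection $\pi$ to obtain $\chi\in\widehat{\T^f}$ trivial on $K$ but nontrivial at $\mathbf{z}_0$, and identify $\chi$ with some $\chi_{\mathbf{m}}$, $\mathbf{m}\in\Z^f$, via $\widehat{\T^f}\cong\Z^f$. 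You correctly isolate the one nontrivial ingredient --- that characters of an LCA group separate points --- and you are right that, since $Q$ is compact, this can be sourced from Peter--Weyl or a Stone--Weierstrass density argument rather than the full LCA duality machinery, which would make the argument self-contained modulo standard compact-group Fourier analysis.
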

To complete the proof of Lemma \ref{prop:weyl-H}, we also need to establish that $\Ann(H)=R$. This will be done in two steps: first show that $\Ann(H')=R$, then we prove that $H=H'$ (Lemma \ref{lem:H=Hprime}).
\begin{proposition}\label{lm3.11}
The annihilator of $H'$ in $\Z^f$ equals $R$, i.e.
\[
\Ann(H')=\{\,\mathbf{m}\in\Z^f:\ e^{2\pi i\,\mathbf{m}\cdot \mathbf{z}}=1 \ \text{for all }\mathbf{z}\in H'\,\}=R.
\]
\end{proposition}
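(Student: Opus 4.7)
The plan is to prove the two inclusions $R\subseteq \Ann(H')$ and $\Ann(H')\subseteq R$ separately. The first is essentially formal: if $\mathbf{m}\in R$, then $Q_{\mathbf{m}}=\sum_i m_i P^{(i)}$ is the zero polynomial, so for every $n\in\mathbb N$,
\[
  \mathbf{m}\cdot \mathbf{v}(n)\ \equiv\ \sum_{i=1}^f m_i\,\beta P^{(i)}(n)\ =\ \beta Q_{\mathbf{m}}(n)\ =\ 0\ \pmod 1,
\]
hence $e^{2\pi i\,\mathbf{m}\cdot \mathbf{v}(n)}=1$ for all $n$. Since the character $\mathbf{z}\mapsto e^{2\pi i\,\mathbf{m}\cdot \mathbf{z}}$ is continuous on $\T^f$, its kernel is a closed subgroup; it contains every $\mathbf v(n)$ and therefore contains $H'$, giving $\mathbf{m}\in\Ann(H')$.

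For the reverse inclusion I would argue by contraposition using Weyl's criterion (Theorem~\ref{pro31}). Suppose $\mathbf{m}\in\Ann(H')$. Since each $\mathbf v(n)\in H'$, the annihilator condition forces $e^{2\pi i\,\mathbf{m}\cdot \mathbf{v}(n)}=1$, which unfolds to
\[
  \beta\,Q_{\mathbf{m}}(n)\ \in\ \Z\qquad\text{for every }n\in\N.
\]
If $Q_{\mathbf{m}}$ were not identically zero, then since $Q_{\mathbf{m}}(0)=\sum_i m_i P^{(i)}(0)=0$, it would be a non-constant polynomial in $\Z[x]$ with no constant term. Thus $\beta Q_{\mathbf{m}}(x)$ is a non-constant real polynomial, and because $\beta\notin\Q$ and some coefficient $c_j$ of $Q_{\mathbf{m}}$ with $j\ge 1$ is a nonzero integer, the coefficient $\beta c_j$ is irrational. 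Theorem~\ref{pro31} then implies that $\{\beta Q_{\mathbf{m}}(n)\}_{n\ge 1}$ is uniformly distributed mod $1$, which is incompatible with $\{\beta Q_{\mathbf{m}}(n)\}=0$ for every $n$. This contradiction forces $Q_{\mathbf{m}}\equiv 0$, i.e.\ $\mathbf{m}\in R$.

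The proof is short and I do not expect any real obstacle; the only point requiring mild care is the role of the hypothesis $P^{(i)}(0)=0$, which guarantees that $Q_{\mathbf{m}}$ has vanishing constant term and hence that the non-vanishing of $Q_{\mathbf{m}}$ actually produces a non-constant polynomial to which Weyl's equidistribution criterion applies. Without this normalization one could only conclude that $Q_{\mathbf{m}}$ is constant, which would still permit nontrivial $\mathbf{m}\notin R$ if $\beta$ times that constant happened to be an integer. The irrationality of $\beta$ does the rest of the work, turning an integer coefficient of positive degree into an irrational coefficient of $\beta Q_{\mathbf{m}}$.
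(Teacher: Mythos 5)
Your proof is correct, and the first inclusion $R\subseteq\Ann(H')$ matches the paper's argument. For the reverse inclusion $\Ann(H')\subseteq R$, however, you take a noticeably heavier route than the paper does. You invoke Weyl's equidistribution theorem (Theorem~\ref{pro31}) to argue that if $Q_{\mathbf{m}}\not\equiv 0$ then $\{\beta Q_{\mathbf{m}}(n)\}$ is uniformly distributed mod~$1$, contradicting $\beta Q_{\mathbf{m}}(n)\in\Z$ for all $n$. The paper uses a much more elementary observation: if $Q_{\mathbf{m}}(n_0)\neq 0$ for some $n_0$, then $\beta Q_{\mathbf{m}}(n_0)\in\Z$ forces $\beta = u/Q_{\mathbf{m}}(n_0)$ for some integer $u$, which makes $\beta$ rational, a contradiction. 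No equidistribution is needed, just a single evaluation of the polynomial. This also means your concluding remark about the hypothesis $P^{(i)}(0)=0$ somewhat overstates its role in this particular proposition: the elementary argument shows that $\beta Q_{\mathbf{m}}(n)\in\Z$ for all $n$ with $\beta$ irrational already forces $Q_{\mathbf{m}}\equiv 0$, whether or not $Q_{\mathbf{m}}$ has vanishing constant term (a nonzero constant $Q_{\mathbf{m}}\equiv c$ would likewise contradict $\beta\notin\Q$). The vanishing at $0$ is needed later, in Lemma~\ref{prop:weyl-H}, precisely to feed Weyl's criterion a non-constant polynomial — so your instinct to reach for Theorem~\ref{pro31} is in the right spirit for the overall argument, just misapplied at this step where a cheaper observation suffices.
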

\begin{proof}
    The characters of $\T^f$ are $\chi_\mathbf{m}(z)=e^{2\pi i\,\mathbf{m}\cdot \mathbf{z}}$ with $\mathbf{m}\in\Z^f$.
By definition,
\[
\Ann(H')=\{\,\mathbf{m}\in\Z^f:\ \chi_\mathbf{m}(\mathbf{z})=1\text{ for all }\mathbf{z}\in H'\,\}.
\]

We first prove that $R\subseteq \Ann(H')$.
If $\mathbf{m}\in R$, then $Q_\mathbf{m}\equiv 0$. Hence, for every $n$,
\[
\chi_\mathbf{m}(\mathbf v(n))=e^{2\pi i\,\mathbf{m}\cdot \mathbf v(n)}
=e^{2\pi i\,\beta\,Q_\mathbf{m}(n)}=e^{2\pi i\cdot 0}=1.
\]
So, $\chi_\mathbf{m}$ equals $1$ on $\{\mathbf v(n)\}_{n\ge 1}$ and, therefore, on the subgroup they generate,
and by continuity also on its closure $H'$. Thus, $\mathbf{m}\in\Ann(H')$.

We now show that $\Ann(H')\subseteq R$.
Let $\mathbf{m}\in\Ann(H')$. In particular, we have $\chi_\mathbf{m}(\mathbf v(n))=1$ for all $n\in\N$, i.e.
\[
e^{2\pi i\,\beta\,Q_\mathbf{m}(n)}=1\quad\text{for all }n\in\N
\quad\Longrightarrow\quad \beta\,Q_\mathbf{m}(n)\in\Z\ \text{ for all }n\in\N.
\]
Suppose, by contradiction, that $Q_\mathbf{m}\not\equiv 0$. Then there exists $n_0\in\N$
with $Q_\mathbf{m}(n_0)\neq 0$. From $\beta\,Q_\mathbf{m}(n_0)\in\Z$ we get
\(
\beta = \dfrac{u}{Q_\mathbf{m}(n_0)}
\)
for some $u\in\Z$, which makes $\beta$ rational, which leads to a contradiction since
$\beta\notin\Q$. Hence, $Q_\mathbf{m}\equiv 0$, i.e. $\mathbf{m}\in R$.

In other words, $\Ann(H')=R$.
\end{proof}

\begin{proof}[Proof of Lemma \ref{lem:H=Hprime}]
    From Proposition \ref{lm3.11}, we know that $\Ann(H')=R$.

For any closed subgroup $K\le \T^f$, Proposition \ref{lem:annihilator-ker} implies
\begin{equation}\label{hpp}
K \;=\; \{\, \mathbf{z}\in\T^f:\ e^{2\pi i\, \mathbf{m}\cdot \mathbf{z}}=1\ \text{ for all } \mathbf{m}\in \Ann(K)\,\}.
\end{equation}
Applying (\ref{hpp}) to $K=H'$ and using $\Ann(H')=R$ gives
\[
H' \;=\; \{\, \mathbf{z}\in\T^f:\ e^{2\pi i\, \mathbf{m}\cdot \mathbf{z}}=1\ \text{ for all } \mathbf{m}\in R\,\} \;=\; H.
\]
This completes the proof.
\end{proof}
\begin{proof}[Proof of Lemma \ref{prop:weyl-H}]
Let $\widehat H$ be the character group of $H$. Then, we know from Proposition \ref{lm3.11} and Lemma \ref{lem:H=Hprime} that $\Ann(H)=R$. Any non-trivial in $\widehat H$ is of the form
$\chi_\mathbf{m}(z)=e^{2\pi i\, \mathbf{m}\cdot \mathbf{z}}$ with some $\mathbf{m}\in\Z^f\setminus R$.
Recall
\[
Q_\mathbf{m}(n)=\sum_{i=1}^f m_i P^{(i)}(n).
\]
With $\mathbf{m}\notin R$ and $P^{(i)}$ are non-constant  with $P^{(i)}(0)=0$, the integer-coefficient polynomial $Q_\mathbf{m}$ is non-constant and $Q_\mathbf{m}(0)=0$.
Since $\beta$ is irrational, at least one coefficient of $\beta Q_\mathbf{m}$ is irrational. By Theorem \ref{pro31},
\[
\lim_{N\to\infty}\frac1N\sum_{n=1}^N e^{2\pi i\,\beta\, Q_\mathbf{m}(n)} =0.
\]
This is equivalent to 
\[\lim_{N\to\infty}\frac1N\sum_{n=1}^N e^{2\pi i\,\mathbf{m}\cdot \mathbf{v}(n)} =0        \ \text{ for all } \mathbf{m}\in \mathbb{Z}^f\setminus\Ann(H).\]
By Weyl's criterion (\ref{WEC}) on $H$, $\{\mathbf v(n)\}_{n\ge 1}$ is equidistributed in $H$.
\end{proof}

\section{Finite abelian groups -- Proof of Theorem \ref{thm3.1}}
Since every finite cyclic group is isomorphic to $\mathbb{Z}_N$ for some $N$, we first prove the theorem for $G = \mathbb{Z}_N$.

\begin{proof}[Proof of Theorem \ref{thm3.1} for cyclic groups]
 Choose $k := 1 + 2^{r+3} t$. Set $\Delta_1 :=k^{r-1} \left\lfloor \frac{N-1}{k^{r-1}S_k} \right\rfloor$, where $S_k = 1 + \frac{1}{k} + \cdots + \frac{1}{k^{r-1}}$. Then $k^{r-1} \mid \Delta_1$ and $\Delta_1 S_k \leq N - 1$.
      
First, we define $E_1$ as $E_1=[0,\Delta_1]\subseteq \mathbb{Z}_N$. For $1<i\leq r$, let $\Delta_i=\Delta_{i-1}/k$, $E_i=(u_i,v_i]$, where $u_i=\sum_{j=1}^{i-1}\Delta_j$ and $v_i=\sum_{j=1}^{i}\Delta_j$. Note that if $\mathbb{Z}_N\setminus (E_1\cup \cdots \cup E_r)=R$, then $|R|<k^{r-1}S_k$. We can add $R$ to any set $E_i$, say, to $E_r$, to extend it to a complete covering of $\mathbb{Z}_N$, and will not use it in the next steps. Notice that $E_1, \ldots, E_r$ are intervals.

Let $\chi$ be any $t-$coloring of $\mathbb{Z}_N$ and let $F_m$ be one of the largest parts. Then we have $|F_m|\geq N/t=:\beta \cdot N$. 

Let $h_1$ be a translation such that $|(F_m+h_1)\cap E_1|\geq  \beta|E_1|$. It is not hard to see that such an $h_1$ exists. Indeed, denote by $E_1(x)$ the indicator of the interval $E_1$ and $F_m(x)$ the indicator function of $F_m$. We have
\[
\frac{1}{N}\sum_{h\in \mathbb{Z}_N}\sum_{x\in \mathbb{Z}_N}E_1(x)F_m(x+h)=\frac{1}{N}\sum_{x\in \mathbb{Z}_N}E_1(x)\sum_{h\in \mathbb{Z}_N}F_m(x+h)= \frac{|F_m|}{N}\sum_{x\in \mathbb{Z}_N}E_1(x)\ge \beta|E_1|.
\]
The left-hand side is the average of $|(F_m+h)\cap E_1|$, hence there is an $h_1$, for which $|(F_m+h_1)\cap E_1|\geq  \beta|E_1|$.

In the next step, we need to define $h_2, \ldots, h_r$ such that $h:=\sum_{i=1}^rh_i$ satisfies the desired property. We present a complete argument for $h_2$ and $E_2$. For $i\ge 3$, we repeat the same process.

We divide $E_1$ into $k$ equal intervals, say $I_1, \ldots, I_k$, and note that $|E_1\cap (F_m+h_1)|\ge \beta |E_1|$. There exists an interval $I_j=(a_j, b_j]\subset E_1$ with $|I_j|=\frac{\Delta_1}{k}$ such that $|I_j\cap (F_m+h_1)|\ge \frac{\beta}{2}|I_j|$. Assume $I_j$ is the right most interval with this property. Since $I_j$ is the right most interval, the intervals $I_{j+1}, \ldots, I_k$ together contain at most $\beta \Delta_1/2$ elements. 

Choose $h_2=u_2-a_j$, then it is clear that $h_2\le \Delta_1$, $I_j+h_2=E_2$, 
and 
\[|E_2\cap (F_m+h_1+h_2)|\ge \frac{\beta}{2}|E_2|.\]

Note that $I_1+h_2, \ldots, I_{j-1}+h_2$ are intervals fully contained in $E_1$, we have
\[|E_1\cap (F_m+h_1+h_2)|\ge \beta \Delta_1-
\sum_{i=j+1}^k|I_{i}\cap (F_m+h_1)|-|I_j\cap (F_m+h_1)|\ge
\frac{\beta\Delta_1}{2}-\frac{\Delta_1}{k}\ge \frac{\beta\Delta_1}{4}\] when $k>\frac{4}{\beta}$. 

We repeat this process for $E_3,\ldots, E_r$. As a consequence, we obtain $h_3, \ldots, h_r$. Let $h:=\sum_{i=1}^rh_i$. At the end, we have $|E_s\cap (F_m+\sum_{i=1}^{s+1} h_i)|\ge \frac{\beta}{2^{s+1}}|E_s|$ for all $1\le s\le r-1$, and $|E_s\cap (F_m+\sum_{i=1}^{s} h_i)|\ge \frac{\beta}{2^{s}}|E_s|$ for all $1\le s\le r$.
Using the facts that $k= 1 + 2^{r+3} t$ and $h_i\le \Delta_{i-1}$ for all $i\ge 2$, we have that for each $1\le s\le r-2$,
\[\sum_{i=s+2}^rh_i\le \Delta_{s+1}+\cdots+\Delta_{r-1}\le \Delta_{s+1}\left(1+\frac{1}{k}+\cdots+\frac{1}{k^{r-s}}\right)\le \frac{k\Delta_{s+1}}{k-1}\le \frac{\beta\Delta_s}{2^{s+3}}\]
holds. Therefore,
\[|E_s\cap (F_m+h)|\ge|E_s\cap (F_m+h_1+\cdots+h_{s+1})|-\sum_{i=s+2}^rh_i \ge |E_s\cap (F_m+h_1+\cdots+h_{s+1})|-\frac{\beta\Delta_s}{2^{s+3}}.\]
Hence,
\[|E_s\cap (F_m+h)|\ge  \frac{\beta\Delta_s}{2^{s+1}}-\frac{\beta\Delta_s}{2^{s+3}}\ge \frac{\beta \Delta_s}{2^{s+2}}.\]
This is equal to\[|G|\cdot \frac{\Delta_1}{tk^{s-1}2^{s+2}|G|}.\]
Set $\alpha=\frac{\Delta_1}{tk^{r-1}2^{r+2}|G|}$. Note that we need $|G|\ge 1+k^{r-1}|S_k|$ to guarantee that $\Delta_r\ge 1$. A direct computation shows that 
\[\alpha \ge \frac{2}{(1+t2^{r+3})^r-1}.\]
This completes the proof.
\end{proof}
\begin{proof}[Proof of Theorem \ref{thm3.1} for general abelian groups]
Assume without loss of generality that $G=\mathbb{Z}_{N}\times G'$, where $G'$ is a finite abelian group and $N\ge N_0$. We denote the zero element in $G'$ by $0_{G'}$.
Fix $r,t\in\mathbb{N}$. Let $E_1,\dots,E_r\subset\mathbb{Z}_{N}$ be the partition family given by Theorem \ref{thm3.1}.
Define $\widetilde{E}_i := E_i\times G'$ of $G$.

Then, $\bigcup_{i=1}^r \widetilde{E}_i$ forms a partition of $G$, and for every finite coloring of $G$ with $t$ colors, $G=\bigcup_{m=1}^t F_m$, there exist an index $m^\ast\in\{1,\dots,t\}$
and a shift $h\in G$ such that
\[
\big|(F_{m^\ast}+h)\cap \widetilde{E}_i\big| \ \ge\ \frac{\alpha(r,t)}{t}\,|\widetilde{E}_i|
\qquad\text{for all }i=1,\dots,r,
\]
where $\alpha(r,t)>0$ is the same constant as in the first part of the theorem.

We now prove the last statement.

For each $x \in \mathbb{Z}_{N}$ and each $m \in \{1,\dots,t\}$, define 
\[
A_m(x) = \{y \in G' : (x,y) \in F_m\}.
\]
Since $\bigcup_{m=1}^t F_m$ is a partition of $G$, the sets $\{A_m(x)\}_{m=1}^t$ form a partition of $G'$ for each fixed $x$. In particular,
\[
\sum_{m=1}^t |A_m(x)| = |G'| \quad \text{for all } x \in \mathbb{Z}_{N}.
\]
By the pigeonhole principle, for each $x$ there exists at least one index $m_0 \in \{1,\dots,t\}$ such that
\[
|A_{m_0}(x)| \geq \frac{|G'|}{t}.
\]
For each $x \in \mathbb{Z}_{N}$, we choose one such $m_0$ and denote it by $m(x)$. Define
\[
C_m = \{x \in \mathbb{Z}_{N} : m(x) = m\} \quad \text{for } m = 1,\dots,t.
\]
Then $\bigcup_{m=1}^t C_m$ is another partition of $\mathbb{Z}_N$.

Applying Theorem \ref{thm3.1} to $\mathbb{Z}_N$, we obtain an index $m^\ast \in \{1,\dots,t\}$ and an element $h_1 \in \mathbb{Z}_{N}$ such that, for all $i = 1,\dots,r$,
\begin{equation}\label{step1}
|(C_{m^\ast} + h_1) \cap E_i| \geq \alpha(r,t) N.
\end{equation}
Let $h = (h_1, 0_{G'}) \in G$. For a fixed $i \in \{1,\dots,r\}$, we compute
\[
\begin{aligned}
\big|(F_{m^\ast} + h) \cap \widetilde{E}_i\big|
&= \sum_{x \in E_i} \big|\{y \in G' : (x - h_1, y) \in F_{m^\ast}\}\big|.
\end{aligned}
\]
Restricting the sum to those $x$ for which $x - h_1 \in C_{m^\ast}$, gives
\[\big|(F_{m^\ast} + h) \cap \widetilde{E}_i\big|
\geq \sum_{\substack{x \in E_i \\ x - h_1 \in C_{m^\ast}}} \big|\{y \in G' : (x - h_1, y) \in F_{m^\ast}\}\big|.
\]
For $x$ with $x - h_1 \in C_{m^\ast}$, we have, by the definition of $C_{m^\ast}$, that
\[
\big|\{y \in G' : (x - h_1, y) \in F_{m^\ast}\}\big| = |A_{m^\ast}(x - h_1)| \geq \frac{|G'|}{t}.
\]
Therefore,
\[
\big|(F_{m^\ast} + h) \cap \widetilde{E}_i\big| \geq \frac{|G'|}{t} \cdot \big|E_i \cap (C_{m^\ast} + h_1)\big|.
\]
Moreover, we know from (\ref{step1}) that
\[
\big|E_i \cap (C_{m^\ast} + h_1)\big| \geq \alpha(r,t) N,
\]
which implies
\[
\big|(F_{m^\ast} + h) \cap \widetilde{E}_i\big| \geq \frac{|G'|}{t} \cdot \alpha(r,t) N = \frac{\alpha(r,t)}{t} |G|.
\]This completes the proof.
\end{proof}

\section{Special linear groups -- Proof of Theorem \ref{thm3.2}}
To prove this theorem, we make use of the following lemmas. 

\begin{lemma}[Chebyshev's inequality]\label{lm3.22}
Let $X$ be a real-valued random variable with mean
$\mu=\mathbb{E}[X]$ and variance
$\operatorname{Var}(X)=\sigma^{2}$.
For every $t>0$,
\[
\Pr\!\bigl(|X-\mu|\ge t\bigr)\le\frac{\sigma^{2}}{t^{2}}.
\]
If $\theta<\mu$ then
\[
\Pr\bigl[X\le\theta\bigr]
      \le
      \frac{\sigma^{2}}{(\mu-\theta)^{2}}.
\]
\end{lemma}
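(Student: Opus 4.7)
The plan is to derive Chebyshev's inequality in the standard way from Markov's inequality applied to the nonnegative random variable $(X-\mu)^2$. First I would recall (or, if the paper prefers self-containment, briefly justify) Markov's inequality: for a nonnegative random variable $Y$ and any $s>0$, one has $\Pr(Y\ge s)\le \mathbb{E}[Y]/s$. The standard one-line proof is to write $s\cdot \mathbf{1}_{\{Y\ge s\}}\le Y$ and take expectations.

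Next, I would set $Y:=(X-\mu)^2$ and $s:=t^2$. Since the event $\{|X-\mu|\ge t\}$ is identical to $\{(X-\mu)^2\ge t^2\}$, Markov's inequality gives
\[
\Pr\bigl(|X-\mu|\ge t\bigr)\;=\;\Pr\bigl((X-\mu)^2\ge t^2\bigr)\;\le\;\frac{\mathbb{E}[(X-\mu)^2]}{t^2}\;=\;\frac{\sigma^2}{t^2},
\]
which is the first inequality. This is routine and presents no obstacle.

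For the one-sided statement with $\theta<\mu$, the key observation is the set-theoretic inclusion
\[
\{X\le\theta\}\;\subseteq\;\{\mu-X\ge \mu-\theta\}\;\subseteq\;\{|X-\mu|\ge \mu-\theta\},
\]
where the last step uses $\mu-\theta>0$. Taking probabilities and applying the first part with $t:=\mu-\theta$ yields
\[
\Pr(X\le\theta)\;\le\;\Pr\bigl(|X-\mu|\ge \mu-\theta\bigr)\;\le\;\frac{\sigma^2}{(\mu-\theta)^2},
\]
as required. The proof has no real hard step; the only care needed is to verify the hypothesis $\theta<\mu$ so that $\mu-\theta>0$ is a legitimate threshold, and to notice that the one-sided conclusion follows from the two-sided one by monotonicity of probability under inclusion rather than requiring a separate (and sharper) Cantelli-type bound.
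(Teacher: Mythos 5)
Your proof is correct and is the standard derivation of Chebyshev's inequality from Markov's inequality, together with the routine observation that the one-sided tail bound follows from the two-sided one by inclusion of events. The paper itself states this lemma as a classical fact without supplying a proof, so there is nothing to compare against; your argument would serve perfectly well if a self-contained proof were desired.
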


\begin{lemma}\label{lm5.22}
Fix a constant $t\ge 1$.
Let $U\subseteq\mathbb{F}_{q}^{2}$ satisfy
$|U|=q^{2}/t$.
For every real number $c$ with $0<c<1$ and every prime power
\[
q>\frac{4t}{c},
\]
there exists a nonzero vector $\mathbf{v}\in\mathbb{F}_{q}^{2}$ such that
\[
\#\Bigl\{\lambda\in\mathbb{F}_{q}:
        \bigl|U\cap\ell_{\mathbf{v},\lambda}\bigr|
          \ge\frac{q}{2t}\Bigr\}
     \ge(1-c)q,
\]
where
$
\ell_{\mathbf{v},\lambda}:=\{(x,y)\in\mathbb{F}_{q}^{2}:(x,y)\cdot \mathbf{v}=\lambda\}.
$
\end{lemma}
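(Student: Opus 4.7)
The plan is to combine a second-moment average over projective directions with Chebyshev's inequality (Lemma~\ref{lm3.22}). First note that for any fixed nonzero $\mathbf{v}\in\mathbb{F}_q^2$, the $q$ lines $\{\ell_{\mathbf{v},\lambda}\}_{\lambda\in\mathbb{F}_q}$ partition $\mathbb{F}_q^2$, so $\sum_{\lambda\in\mathbb{F}_q}|U\cap\ell_{\mathbf{v},\lambda}|=|U|=q^2/t$, and the mean intersection size under $\lambda$ uniform on $\mathbb{F}_q$ is $\mu:=q/t$. The target threshold $q/(2t)$ is exactly $\mu/2$, so I will try to find a direction whose line-intersection sizes concentrate tightly enough around the mean that few $\lambda$'s fall below $\mu/2$.

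To locate such a direction, I would average the second moment $\sum_\lambda|U\cap\ell_{\mathbf{v},\lambda}|^2$ over the $q+1$ projective directions $[\mathbf{v}]\in\mathbb{P}^1(\mathbb{F}_q)$. A standard double-count yields
\[
\sum_{[\mathbf{v}]}\sum_{\lambda}|U\cap\ell_{\mathbf{v},\lambda}|^2
\;=\;\#\{(u_1,u_2,[\mathbf{v}])\in U^2\times\mathbb{P}^1(\mathbb{F}_q):\mathbf{v}\cdot(u_1-u_2)=0\}
\;=\;(q+1)|U|+(|U|^2-|U|),
\]
since each diagonal pair $u_1=u_2$ contributes all $q+1$ directions while each off-diagonal pair is annihilated by the unique projective direction perpendicular to $u_1-u_2$. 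Dividing by $q+1$, there exists $\mathbf{v}_\ast$ with $\sum_\lambda|U\cap\ell_{\mathbf{v}_\ast,\lambda}|^2\le(|U|^2+q|U|)/(q+1)$.

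For this $\mathbf{v}_\ast$, letting $X:=|U\cap\ell_{\mathbf{v}_\ast,\lambda}|$ with $\lambda$ uniform on $\mathbb{F}_q$, a short calculation gives
\[
\operatorname{Var}(X)\;\le\;\frac{|U|^2+q|U|}{q(q+1)}-\frac{|U|^2}{q^2}\;=\;\frac{|U|}{q+1}\Bigl(1-\frac{|U|}{q^2}\Bigr)\;\le\;\frac{q}{t}.
\]
Chebyshev with $\theta=q/(2t)$, so that $\mu-\theta=q/(2t)$, then produces
\[
\Pr\bigl[X\le q/(2t)\bigr]\;\le\;\frac{\operatorname{Var}(X)}{(\mu-\theta)^2}\;\le\;\frac{q/t}{q^2/(4t^2)}\;=\;\frac{4t}{q}\;<\;c,
\]
where the final inequality uses the hypothesis $q>4t/c$. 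Rewriting in counts, $\#\{\lambda:|U\cap\ell_{\mathbf{v}_\ast,\lambda}|\ge q/(2t)\}\ge(1-c)q$, which is the desired conclusion. The only mildly delicate step is the projective bookkeeping in the double-count: one must parametrize directions via $\mathbb{P}^1(\mathbb{F}_q)$ (hence the factor $q+1$) and cleanly separate diagonal from off-diagonal contributions in the inner sum. Once this is in place, the $O(q/t)$ variance bound is comfortably dominated by the denominator $(q/(2t))^2$, and the threshold $q>4t/c$ falls out directly.
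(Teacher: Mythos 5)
Your proof is correct and follows essentially the same argument as the paper: average the second moment of line--intersection counts over directions, pigeonhole to pick a good direction $\mathbf{v}_\ast$, bound the variance, and apply Chebyshev with threshold $\mu/2$. The only cosmetic difference is that you sum over the $q+1$ projective directions while the paper sums over all $q^2-1$ nonzero vectors; since each projective class contributes the same inner sum $q-1$ times, the two normalizations yield the identical bound $\frac{|U|(|U|+q)}{q+1}$ for $\sum_\lambda m(\mathbf{v}_\ast,\lambda)^2$, so the approaches coincide.
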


\begin{proof}
For $\mathbf{v}\in \mathbb{F}_q^2\setminus \{0\}$ and $\lambda\in\mathbb{F}_{q}$ define
$
m(\mathbf{v},\lambda):=\bigl|U\cap\ell_{\mathbf{v},\lambda}\bigr|.
$
Set
$
\mu:=|U|/q=q/t.
$ By a direct computation, we have
\[
\sum_{\mathbf{v}\neq \mathbf{0}}\sum_{\lambda}m(\mathbf{v},\lambda)^{2}
     =(q-1)\bigl(|U|^{2}-|U|\bigr)+(q^{2}-1)|U|
     =(q-1)|U|\bigl(|U|+q\bigr).
\]
There are $q^{2}-1$ nonzero vectors $\mathbf{v}$, so for at least one
$\mathbf{v}^{\ast}$
\[
\sum_{\lambda\in \mathbb{F}_q}m(\mathbf{v}^{\ast},\lambda)^{2}
     \le\frac{(q-1)|U|\bigl(|U|+q\bigr)}{q^{2}-1}
     \le\frac{|U|^{2}}{q}+|U|
     =\frac{q^{3}}{t^{2}}+\frac{q^{2}}{t}.
\]

Slightly abusing the notation, we write $m(\lambda):=m(\mathbf{v}^{\ast},\lambda)$. Let $X$ be a random variable defined by $X(\lambda)=m(\lambda)$, where $\lambda\in \mathbb{F}_q$ is chosen uniformly.  Using the above estimate
\[
\operatorname{Var}(X)
  =\frac{1}{q}\sum_{\lambda}(m(\lambda)-\mu)^{2}
  =\frac{1}{q}\sum_{\lambda}m(\lambda)^{2}-\mu^{2}
  \le\frac{q}{t}.
\]

Set $\theta:=\mu/2=q/(2t)$ and define
$\mathcal{A}:=\{\lambda:m(\lambda)<\theta\}$.
Applying Lemma \ref{lm3.22}, we obtain
\[
\Pr\bigl[X<\theta\bigr]
   \le\frac{\operatorname{Var}(X)}{(\mu-\theta)^{2}}
   =\frac{q/t}{(q/2t)^{2}}
   =\frac{4t}{q}.
\]
Note that $\Pr[X<\theta]=\frac{|\mathcal{A}|}{q}$. So $|\mathcal{A}|\le 4t$.
If $q>4t/c$, then there exists a set $\mathcal{B}$ of at least $(1-c)q$ elements $\lambda$ such that $m(\lambda)\ge q/2t$.

This completes the proof.
\end{proof}
\begin{proof}[Proof of Theorem \ref{thm3.2}]
Let $I_1\cup I_2\cup \cdots \cup I_r$ be a partition of $\mathbb{F}_q$ with $|I_1|=\cdots=|I_{r-1}|=\left\lfloor\frac{q}{r}\right\rfloor$ and $|I_r|=q-(r-1)\left\lfloor\frac{q}{r}\right\rfloor$.

    Let $E_i$ be the set of matrices of the form 
\[\begin{pmatrix}
    x&y\\
    z&w
\end{pmatrix},\]
where $xw-yz=1$, $x\in I_i\subset \mathbb{F}_q$, $y, z\in \mathbb{F}_q$. Then the sets $E_i$ form a partition of $SL_2(\mathbb{F}_q)$. 

For each finite coloring $\bigcup_{i=1}^t F_i$ of $SL_2(\mathbb{F}_q)$ with $t$ colors, let $F_m$ be a set such that $|F_m|\ge q^3/2t$. Assume without loss of generality that the projection of $F_m$ onto the first two coordinates, denoted by $F_m^{1, 2}$, is of size at least $q^2/4t$, and for each $(x, y)\in F_m^{1, 2}$, the number of $z$ is at least $q/4t$.

Applying Lemma \ref{lm5.22} with $c=\frac{1}{2r}$ and $U=F_m^{1,2}$, there exists a nonzero vector $\mathbf{v}=(v_1, v_2)\in\mathbb{F}_{q}^{2}$ such that
\[
\#\Bigl\{\lambda\in\mathbb{F}_{q}:
        \bigl|F_m^{1,2}\cap\ell_{\mathbf{v},\lambda}\bigr|
          \ge\frac{q}{8t}\Bigr\}
     \ge(1-c)q,
\]
where
$
\ell_{\mathbf{v},\lambda}:=\{(x,y)\in\mathbb{F}_{q}^{2}:(x,y)\cdot \mathbf{v}=\lambda\},
$ provided that $q$ is large enough. Let 
\[h=\begin{pmatrix}
    v_1&-\frac{1}{v_2}\\
    v_2&0
\end{pmatrix}.\]
We now show that $|F_m\cdot h\cap E_i|\ge \frac{q^3}{4rt^2}$.
Indeed, since $\#\{(x, y)\cdot \mathbf{v}\colon (x, y)\in F_m^{1, 2}\}\ge (1-c)q$, we can conclude that the set $\mathbf{v}\cdot F_m^{1,2}$ intersects $I_i$ in at least $|I_i|/2$ elements. For each element $\lambda$ in the intersection, we also have that 
\[\#\{(x, y)\in F_m^{1, 2}\colon (x, y)\cdot (v_1, v_2)=\lambda \}\ge \frac{q}{8t}.\]
Furthermore, for each $(x, y)\in F_m^{1,2}$, the number of $z$ such that 
\[\begin{pmatrix}
    x&y\\
    z&w
\end{pmatrix}\in F_m\]
is at least $q/4t$. Hence, for each $\lambda$, there are at least $\frac{q^2}{32t^2}$ matrices $M$ in $F_m$ such that 
\[M\cdot \begin{pmatrix}
    v_1&-\frac{1}{v_2}\\
    v_2&0
\end{pmatrix}=\begin{pmatrix}
    \lambda&*\\
    *&*
\end{pmatrix},~\mbox{where}~~ *\in \mathbb{F}_q.\]
This means that $|F_m\cdot h\cap E_i|\ge |E_i|/2=\frac{q}{2r}\cdot \frac{q^2}{32t^2}=\frac{q^3}{64rt^2}$. 
This completes the proof.
\end{proof}

\section{Acknowledgements}
Norbert Hegyv\'{a}ri was supported by the National Research, Development and Innovation Office NKFIH Grant No K-146387. J\'{a}nos Pach was supported by NKFIH grant K-131529 and ERC Advanced Grant 882971 ``GeoScape". We thank the Vietnam Institute for Advanced Study in Mathematics (VIASM), where some parts of this paper were completed, for its hospitality.

\end{document}